\newtheorem*{theorem*}{Theorem}
\newtheorem{theorem}{Theorem}
\newtheorem{lemma}{Lemma}
\newtheorem{proposition}{Proposition}
\newtheorem{corollary}{Corollary}
\newtheorem*{conjecture*}{Conjecture}
\theoremstyle{remark}
\newtheorem*{remark*}{Remark}
\theoremstyle{proof}
\numberwithin{equation}{section}
\numberwithin{assumption}{section}
\newcommand{\Q}{\mathbb{Q}}
\newcommand{\C}{\mathbb{C}}
\newcommand{\R}{\mathbb{R}}
\newcommand{\N}{\mathbb{N}}
\newcommand{\Gal}{\mbox{Gal}}
\begin{document}
	\title{Moments of non-normal number fields - II}
	
	\author{Krishnarjun Krishnamoorthy}
	\email[Krishnarjun K]{krishnarjunmaths@outlook.com, krishnarjunmaths@gmail.com}
	\address{Yanqi Lake Beijing Institute of Mathemtical Sciences and Applications (BIMSA), No. 544, Hefangkou Village, Huaibei Town, Huairou District, Beijing.}
	
	\keywords{Moments, Dedekind zeta function, Artin $L$ functions, Moments}
	\subjclass[2020]{11F66, 11F30, 11R42, 20C30.}
	
	\maketitle

	\begin{abstract}
		Suppose $K$ is a number field and $a_K(m)$ is the number of integral ideals of norm equal to $m$ in $K$, then for any integer $l$, we asymptotically evaluate the sum
		\[
		\sum_{m\leqslant T} a_K^l(m)
		\]
		as $T\to\infty$. We also consider the moments of the corresponding Dedekind zeta function. We prove lower bounds of expected order of magnitude and slightly improve the known upper bound for the second moment in the non-Galois case.
	\end{abstract}

	\section{Introduction}\label{Section "Introduction"}
	
	Suppose that $K$ is a field extension of degree $d$ over $\Q$ (that is, a number field). Let $a_K(m)$ (for $m\in \N$) denote the number of integral ideals in $K$ of norm equal to $m$. Let $s$ be a complex number. The Dedekind zeta function of $K$ can be expressed as 
	\begin{equation}\label{Equation "Dedekind zeta function definition"}
		\zeta_K(s) := \sum_{m=1}^{\infty} \frac{a_K(m)}{m^s}.
	\end{equation}
	It can be shown that $a_K(m)$ is a multiplicative function and satisfies the bound
	\begin{equation}\label{Equation "Ramanujan bound"}
	a_K(m) \ll_\epsilon m^{\epsilon}
	\end{equation}
	for any positive $\epsilon$. Thus the series \eqref{Equation "Dedekind zeta function definition"} converges absolutely in the half plane $\Re(s) > 1$ where it can be expressed as the following Euler product,
	\begin{equation}
		\zeta_K(s) = \prod_{p}\left(1 + \frac{a_K(p)}{p^s} + \frac{a_K(p^2)}{p^{2s}} + \ldots\right).
	\end{equation}
	Furthermore $\zeta_K(s)$ has a meromorphic continuation to the whole complex plane with a simple pole at $s=1$ and satisfies a functional equation connecting values at $s$ and $1-s$ (see \cite[\S 5, Chapter VII]{Neukirch}).
	
	Dedekind zeta functions are natural generalizations of the Riemann zeta function for number fields. As with the Riemann zeta function, the behavior of $\zeta_K(s)$ inside the critical strip $0 < \Re(s) < 1$ is quite mysterious. There are many aspects of the behavior of Dedekind zeta function inside the critical strip that are of interest. In this paper, we focus on understanding the ``moments" along the critical line (that is $\Re(s) = \frac{1}{2}$)
	\begin{equation}\label{Equation "Continuous moment"}
		I_K^{(l)}(T) :=\int\limits_{1}^{T} \left|\zeta_K\left(\frac{1}{2}+it\right)\right|^l dt.
	\end{equation}
	Obtaining precise asymptotic for the above integral is a very hard problem, and even the base case of $K=\Q$ poses serious difficulties. Thus we turn to the discrete analogue of the above problem, which maybe more accessible. Namely, we ask if we can estimate 
	\begin{equation}\label{Equation "Discrete moment"}
		M_K^{(l)}(T):=\sum_{m\leqslant T} a_K^l(m)
	\end{equation}
	for positive integral values of $l$. The case when $l=1$ is classical and maybe deduced as a consequence of the meromorphic continuation of $\zeta_K(s)$. This is also analogous to obtaining estimates for averages of the higher order divisor function (often called the Piltz divisor problem). This problem was first considered by Chandrasekharan and Narasimhan for the case when $K$ was Galois over $\Q$ and when $l=2$ \cite{ChandNarApprox} and was generalized by Chandrasekharan and Good for arbitrary $l$ \cite{ChanGood} (but still when $K$ was Galois over $\Q$). A particular non-Galois case was settled by Fomenko \cite{Fomenko} and later improved upon by L\"u \cite{Lu}. By different methods, this was further generalized in a recent work of the author (along with Kalyan Chakraborty) for many families of non-Galois number fields \cite{KKJNT}. However, the general problem still remained unsolved. The purpose of this paper is to estimate $M_K^l(T)$ for any number field $K$ and any positive integer $l$ thereby completing the solution to this problem. We also provide a unified treatment which reproduces many of the special cases treated in previous work. The final result conforms to expectations in that the main term is of the order $T$ times a power of $\log(T)$.
	
	Before we state the main theorem, we introduce  and fix the following notation throughout the paper. Every representation that we consider will be over $\C$. Let $K$ be as above and $L$ be its Galois closure. The degree of $K$ shall be denoted by $d$. Denote the Galois groups $Gal(L/\Q)$ as $G$ and its subgroup $Gal(L/K)$ as $H$. Let $1_H$ denote the trivial representation of $H$. Denote the corresponding induction to $G$ as $\rho_H$ and its character as $\chi_{\rho_H}$. We now have the following theorem.
	
	\begin{theorem}\label{Theorem "Discrete moments"}
		Suppose that $l$ is a natural number. There exists an integer $\mathsf{m}_l$ such that 
		\begin{equation}\label{Equation "Discrete Moment"}
			\sum_{m\leqslant T} a_K^l(m) \sim c(l,K) T \log^{\mathsf{m}_l}(T)
		\end{equation}
		for some constant $c(l,K)$ depending on $l$ and $K$, as $T\to \infty$.
		Moreover, we have
		\[
		\mathsf{m}_l = \left(\frac{1}{|G|}\sum_{g\in G} \chi^l_{\rho_H}(g)\right) -1.
		\]
	\end{theorem}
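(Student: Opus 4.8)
The plan is to study the Dirichlet series
\[
D(s) := \sum_{m=1}^{\infty} \frac{a_K^l(m)}{m^s},
\]
which, since $a_K$ is multiplicative and hence so is $a_K^l$, factors as an Euler product and, by the bound \eqref{Equation "Ramanujan bound"}, converges absolutely for $\Re(s) > 1$. The exponent $\mathsf{m}_l$ is dictated by the strength of the singularity of $D(s)$ at $s = 1$: if one shows that $D(s) = (s-1)^{-\kappa} G(s)$ near $s = 1$ with $G$ holomorphic and $G(1) \neq 0$, then a Tauberian theorem of Selberg--Delange type (applicable because the coefficients $a_K^l(m)$ are non-negative) yields $\sum_{m\leqslant T} a_K^l(m) \sim \frac{G(1)}{\Gamma(\kappa)}\, T (\log T)^{\kappa - 1}$, so that $\mathsf{m}_l = \kappa - 1$ and $c(l,K) = G(1)/\Gamma(\kappa)$. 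The whole problem therefore reduces to computing the order $\kappa$ of the logarithmic singularity.

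To locate this singularity I would pass to logarithms. Writing the local factor at $p$ as $L_p(s) = \sum_{k\geqslant 0} a_K^l(p^k) p^{-ks}$ and expanding $\log L_p(s)$, the contribution of the prime-power terms with $k\geqslant 2$ is, thanks to \eqref{Equation "Ramanujan bound"}, absolutely convergent and hence holomorphic on $\Re(s) > \tfrac12$. Consequently
\[
\log D(s) = \sum_p \frac{a_K^l(p)}{p^s} + \Phi(s),
\]
with $\Phi$ holomorphic on $\Re(s) > \tfrac12$, so the singular behaviour at $s = 1$ is governed entirely by the prime sum.

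The key number-theoretic input is the identification of $a_K(p)$ for $p$ unramified in $L$. Such a $p$ has a well-defined Frobenius conjugacy class $\sigma_p \subset G$, and the primes of $K$ above $p$ correspond to the orbits of $\langle\sigma_p\rangle$ on the coset space $G/H$, with residue degrees equal to the orbit lengths. Hence $a_K(p)$, being the number of degree-one primes above $p$, equals the number of fixed points of $\sigma_p$ on $G/H$, i.e.\ $a_K(p) = \chi_{\rho_H}(\sigma_p)$, and so $a_K^l(p) = \chi^l_{\rho_H}(\sigma_p)$. Decomposing the class function $\chi^l_{\rho_H}$ into irreducible characters of $G$, say $\chi^l_{\rho_H} = \sum_\pi m_\pi \chi_\pi$ with $m_\pi = \langle \chi^l_{\rho_H}, \chi_\pi\rangle$, I would use the standard relation $\sum_p \chi_\pi(\sigma_p) p^{-s} = \log L(s,\pi) + (\text{holomorphic on } \Re(s) > \tfrac12)$ between prime sums and Artin $L$-functions to obtain $\log D(s) = \sum_\pi m_\pi \log L(s,\pi) + (\text{holomorphic near } s=1)$. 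Only the trivial representation contributes a singularity: $L(s,1_G) = \zeta(s)$ has a simple pole at $s = 1$, while for nontrivial irreducible $\pi$ the function $L(s,\pi)$ is holomorphic and non-vanishing at $s=1$. The multiplicity of the trivial representation is
\[
m_{1_G} = \langle \chi^l_{\rho_H}, 1_G\rangle = \frac{1}{|G|}\sum_{g\in G}\chi^l_{\rho_H}(g) =: \kappa,
\]
which is the multiplicity of the trivial representation in $\rho_H^{\otimes l}$, hence a non-negative integer, and positive because $\rho_H$ already contains $1_G$. This gives $\log D(s) = \kappa\log\frac{1}{s-1} + (\text{holomorphic near } s=1)$, i.e.\ the factorization $D(s) = (s-1)^{-\kappa}G(s)$ with $\kappa = \mathsf{m}_l + 1$, matching the claimed value.

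The main obstacle is the analytic input needed to run the Tauberian step rather than the density computation itself. One must know that, apart from the simple pole coming from the trivial representation, $D(s)$ continues holomorphically and without zeros to a neighborhood of the line $\Re(s) = 1$; equivalently, that each Artin $L$-function $L(s,\pi)$ with $\pi$ nontrivial is holomorphic and non-zero there. Holomorphy at $s=1$ follows from the Aramata--Brauer theorem (entireness of $\zeta_L(s)/\zeta(s)$), while the non-vanishing on $\Re(s)=1$ is the Artin-$L$ analogue of the classical de la Vall\'{e}e--Poussin argument. Granting these inputs, a Tauberian theorem of Delange type applies and delivers both the asymptotic \eqref{Equation "Discrete Moment"} and the value of $\mathsf{m}_l$; tracking the constant through the Selberg--Delange machinery would additionally furnish an explicit expression for $c(l,K)$.
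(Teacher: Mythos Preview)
Your proposal is correct and follows essentially the same route as the paper: relate the Dirichlet series $D_l(s)$, up to a factor holomorphic on $\Re(s)>\tfrac12$, to the product of Artin $L$-functions attached to the irreducible constituents of $\rho_H^{\otimes l}$, invoke the Brauer-type input that $L(s,\pi)$ is holomorphic and non-vanishing on $\Re(s)\geqslant 1$ for nontrivial irreducible $\pi$, and then apply a Delange--Ikehara Tauberian theorem to read off the pole order $\kappa=\langle\rho_H^{\otimes l},1_G\rangle_G$. The only cosmetic difference is that you work with $\log D(s)$ while the paper compares Euler products directly to write $D_l(s)=L(s,\rho_H^{\otimes l})U_2(s)$.
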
		
	As we have mentioned before, it is of interest to find asymptotics for $I_K^{(l)}(T)$ and often such estimates are tied to the estimates for $M_K^{(l)}(T)$. When $K$ is Galois over $\Q$ (equivalently $H$ is the trivial subgroup), lower bounds on $I_K^{(l)}(T)$ of the expected order of magnitude are known unconditionally \cite{AkbaryFodden}. Conditionally on the generalized Riemann hypothesis, upper bounds of the correct order of magnitude (except for an $\epsilon$) are also known \cite{Milinovich}. Below, we provide analogous lower bounds for the non-Galois case. It would be convenient to define
	\begin{equation}
		\beta_K := \left|H\setminus G/H\right|.
	\end{equation}
	
	\begin{theorem}\label{Theorem "Continuous moment"}
		For any rational $k\geqslant 0$, we have
		\[
		\int\limits_{1}^{T} \left|\zeta_K\left(\frac{1}{2}+it\right)\right|^{2k} \gg T\log^{\beta_K k^2}(T).
		\]
	\end{theorem}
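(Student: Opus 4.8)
\section*{Proof proposal for Theorem \ref{Theorem "Continuous moment"}}

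The plan is to follow the Dirichlet-polynomial lower-bound method of Rudnick--Soundararajan, in the continuous form of Radziwi\l\l--Soundararajan, adapted to $\zeta_K$. Fix a small $\vartheta>0$, set $y=T^{\vartheta}$, and let $b_k(n)$ denote the Dirichlet coefficients of $\zeta_K(s)^k$, which are non-negative for $k\geq 0$. Form the finite Dirichlet polynomial $A(t)=\sum_{n\le y} b_k(n)\,n^{-1/2-it}$, a truncation of $\zeta_K(\tfrac12+it)^k$. The whole argument rests on the Cauchy--Schwarz inequality
\[
\Big|\int_T^{2T}\zeta_K(\tfrac12+it)^k\,\overline{A(t)}\,dt\Big|^2\ \le\ \Big(\int_T^{2T}|\zeta_K(\tfrac12+it)|^{2k}\,dt\Big)\Big(\int_T^{2T}|A(t)|^2\,dt\Big),
\]
so that, writing $S_1$ for the cross term and $S_2=\int_T^{2T}|A|^2$, the target moment over $[T,2T]$ is $\geq |S_1|^2/S_2$. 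It therefore suffices to prove $S_1\asymp T(\log T)^{\beta_K k^2}$ and $S_2\asymp T(\log T)^{\beta_K k^2}$, after which a dyadic sum passes from $[T,2T]$ to $[1,T]$.

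First I would dispatch the two mean values. The second moment $S_2$ is handled by the mean value theorem for Dirichlet polynomials: since $y=T^\vartheta=o(T)$, one gets $S_2=(T+O(y))\sum_{n\le y} b_k(n)^2/n\asymp T\sum_{n\le y} b_k(n)^2/n$. For $S_1$ one replaces $\zeta_K(\tfrac12+it)^k$ by its Dirichlet series via the approximate functional equation (of length a fixed power of $T$, which exceeds $y$), pairs against $\overline{A}$, and extracts the diagonal $m=n\le y$; the off-diagonal and tail contributions are negligible provided $\vartheta$ is small enough, giving $S_1\sim T\sum_{n\le y} b_k(n)^2/n$ as well. Thus everything reduces to the arithmetic estimate $\sum_{n\le y} b_k(n)^2/n\asymp(\log y)^{\beta_K k^2}$.

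This last estimate is where the group theory enters, and it is the place to locate the exponent $\beta_K k^2$. Since $b_k$ is multiplicative with $b_k(p)=k\,a_K(p)$, we have $\sum_p b_k(p)^2 p^{-s}=k^2\sum_p a_K(p)^2 p^{-s}$, so the order of the singularity of $\sum_n b_k(n)^2 n^{-s}$ at $s=1$ equals $k^2$ times the mean value of $a_K(p)^2$ over primes. For unramified $p$ one has $a_K(p)=\chi_{\rho_H}(\mathrm{Frob}_p)$, the number of fixed points of $\mathrm{Frob}_p$ on $G/H$, and by the Chebotarev density theorem $\mathrm{Frob}_p$ equidistributes in conjugacy classes, so
\[
\frac{1}{\pi(x)}\sum_{p\le x} a_K(p)^2\ \longrightarrow\ \frac{1}{|G|}\sum_{g\in G}\chi_{\rho_H}(g)^2=\langle \chi_{\rho_H},\chi_{\rho_H}\rangle=\langle \mathrm{Ind}_H^G 1_H,\ \mathrm{Ind}_H^G 1_H\rangle=|H\backslash G/H|=\beta_K,
\]
the last equalities being Frobenius reciprocity and Mackey's double-coset formula. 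Hence $\sum_p a_K(p)^2 p^{-s}\sim \beta_K\log\frac{1}{s-1}$, the Euler product gives $\sum_n b_k(n)^2 n^{-s}\sim C(s-1)^{-\beta_K k^2}$, and a Selberg--Delange/Tauberian argument yields $\sum_{n\le y} b_k(n)^2/n\asymp(\log y)^{\beta_K k^2}\asymp(\log T)^{\beta_K k^2}$, closing the chain. Note that $k=1$ recovers exactly the second moment with exponent $\beta_K$.

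The main obstacle is the cross term $S_1$ when $k$ is not an integer, since then $\zeta_K(\tfrac12+it)^k$ is not a Dirichlet series and the diagonal extraction above is unjustified. I would circumvent this precisely as Radziwi\l\l--Soundararajan do: never form $\zeta_K^k$, but instead take $A$ to be the \emph{finite} Dirichlet polynomial with the coefficients of $\zeta_K^{k-1}$ (well defined for any real $k$), study the twisted second moment $\int_T^{2T}|\zeta_K(\tfrac12+it)|^2|A(t)|^2\,dt$ in place of $S_1$, and replace Cauchy--Schwarz by H\"older's inequality with exponents $k$ and $k/(k-1)$; the resulting high moment $\int_T^{2T}|A|^{2k/(k-1)}$ of a Dirichlet polynomial is controlled by a combinatorial (hypercontractivity) bound once $y$ is a sufficiently small power of $T$. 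The range $0\le k<1$ degenerates the H\"older exponents and is treated separately by the known fractional-moment lower bounds. In every case the only field-specific input is the identity $\tfrac1{\pi(x)}\sum_{p\le x}a_K(p)^2\to\beta_K$ above, so the real work is in making the analytic $L$-function inputs---the approximate functional equation for $\zeta_K^k$ and the twisted second moment---uniform in $K$.
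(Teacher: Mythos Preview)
Your arithmetic input is exactly right and matches the paper's: identifying the exponent as $\beta_K = |H\backslash G/H|$ via Chebotarev together with Frobenius reciprocity and Mackey's formula is precisely the content of Lemmas~\ref{Lemma "CDT 1"}, \ref{Lemma "CDT 2"}, and \ref{Lemma "Auxiliary 1"}. But the paper does not redo any of the analytic machinery: it simply checks that $\zeta_K$ satisfies the hypotheses of \cite[Theorem~2.3]{AkbaryFodden} with parameter $\beta=\beta_K$ and quotes that result as a black box. You instead attempt to sketch a Rudnick--Soundararajan / Radziwi\l\l--Soundararajan argument directly.

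For integer $k\ge 1$ your sketch is sound. The genuine gap is your treatment of non-integer rational $k$. You propose to pass through the twisted second moment $\int_T^{2T}|\zeta_K(\tfrac12+it)|^2|A(t)|^2\,dt$ \`a la Radziwi\l\l--Soundararajan, but that method requires a lower bound of the correct order for this quantity, and for $\zeta_K$ of degree $d\ge 3$ even the \emph{untwisted} second moment is far out of reach: the paper's own Theorem~\ref{Theorem "Improvement"} gives only $\int_1^T|\zeta_K(\tfrac12+it)|^2\,dt \ll T^{d/2}\log^{\beta_K}T$, off by a full power of $T$ from the conjectured $T\log^{\beta_K}T$. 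So the twisted-second-moment route is blocked in degree $\ge 3$, which is precisely the regime of interest here. Your closing remark that ``the real work is in making \ldots\ the twisted second moment uniform in $K$'' understates the difficulty: this is not a uniformity issue but an open problem. The Akbary--Fodden theorem the paper invokes reaches rational $k$ by a positivity argument (in the tradition of Ramachandra and Heath-Brown) that needs only the functional equation, the Ramanujan bound, and the prime-sum asymptotic you correctly computed, with no second-moment input at all.
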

	The above lower bound is of the expected order of magnitude. To see this we briefly recall some definitions regarding the Selberg class \cite{SelbergAmalfi} and direct the reader to \cite{RMSelberg} for more details. Define the class $\mathcal{S}$ to consist of Dirichlet series $F(s) := \sum_{n=1}^{\infty} \frac{a_F(n)}{n^s}$ which satisfy the following properties:
	\begin{enumerate}
		\item	(Region of convergence) The series defining $F(s)$ converges absolutely for $\Re(s) > 1$.
		\item 	(Analytic continuation) $F(s)$ extends to a meromorphic function so that for some integer $m\geqslant 0$, $(s-1)^mF(s)$ is an entire function of finite order.
		\item 	(Functional equation) There are numbers $Q > 0, \alpha_i > 0, \Re(r_i) \geqslant 0$ such that
		\[
		\Phi(s) := Q^s \prod_{i=1}^{d} \Gamma(\alpha_is + r_i) F(s)
		\]
		satisfies $\Phi(s) = w\overline{\Phi(1-\overline{s})}$ for some complex number $w$ with $|w|=1$.
		\item 	(Euler product) $F(s)$ can be written as the product $\prod_p F_p(s)$ where $F_p(s) = \exp\left(\sum_{k=1}^{\infty} b_{p^k}/p^{ks}\right)$ where $b_{p^k} = \mathcal{O}(p^{k\theta})$ for some $\theta < 1/2$.
		\item 	(Ramanujan hypothesis) $a_F(n) = \mathcal{O}(n^\epsilon)$ for any fixed $\epsilon > 0$.
	\end{enumerate}
	
	A function $F\in \mathcal{S}$ is called \textit{primitive} if $F$ cannot be written as a product of any two elements of $\mathcal{S}$ except for $F = 1\cdot F$. Selberg made the following conjectures about the elements in $\mathcal{S}$. 
	
	\begin{conjecture*}[Conjecture A]
		For all $F\in \mathcal{S}$, there exists a positive integer $n_F$ such that 
		\[
		\sum_{p\leqslant X} \frac{\left|a_F(p)\right|^2}{p} = n_F\log\log(X) + \mathcal{O}(1).
		\]
	\end{conjecture*}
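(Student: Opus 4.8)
The statement is Selberg's Conjecture A, so a proof valid for \emph{every} $F\in\mathcal S$ lies beyond present technology; my plan is therefore to lay out the standard strategy, to pinpoint exactly where it fails in the axiomatic setting, and then to carry it through unconditionally in the one case this paper genuinely needs, $F=\zeta_K$. The general heuristic is to pass to the generating Dirichlet series, so that the claim becomes
\[
\sum_{p}\frac{|a_F(p)|^2}{p^s}=n_F\log\frac{1}{s-1}+\mathcal O(1)\qquad (s\to 1^+),
\]
after which a Mertens--Abel summation recovers the $n_F\log\log X+\mathcal O(1)$ form. The device would be the Rankin--Selberg convolution $F\times\overline F$: writing $F=\prod_i F_i^{e_i}$ for the factorization into primitive elements, one expects $F\times\overline F$ to have a pole at $s=1$ of order $\sum_i e_i^2$, with the logarithm of its Euler factors reproducing $\sum_p|a_F(p)|^2p^{-s}$ up to a remainder that converges for $\Re(s)>1/2$ by the Ramanujan hypothesis. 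This predicts the positive integer $n_F=\sum_i e_i^2$.

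The hard part, and the reason this remains a conjecture, is that for arbitrary elements of $\mathcal S$ the axioms (1)--(5) do not supply the meromorphic continuation of $F\times\overline F$ past $\Re(s)=1$, nor the order of its pole there; Conjecture A is for this reason entangled with the orthogonality Conjecture B, and both are open. Any treatment of the fully general statement must stay conditional on a Rankin--Selberg theory available at present only for automorphic $F$, so I would not attempt it here.

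For $F=\zeta_K$ the statement is unconditional, and this is the instance underlying Theorem~\ref{Theorem "Continuous moment"}. At an unramified prime $p$, $a_{\zeta_K}(p)$ is the number of degree-one primes of $K$ above $p$, equivalently the number of fixed points of $\mathrm{Frob}_p$ acting on $G/H$, which is the value $\chi_{\rho_H}(\mathrm{Frob}_p)$ of the permutation character of $\rho_H=\mathrm{Ind}_H^G 1_H$; the finitely many ramified primes contribute only $\mathcal O(1)$. Since $\chi_{\rho_H}^2$ is a genuine character, I would decompose it into irreducibles: its trivial constituent has multiplicity $\langle\chi_{\rho_H}^2,1\rangle=\langle\rho_H,\rho_H\rangle=|H\backslash G/H|=\beta_K$, while the remaining $\rho\neq 1$ occur with nonnegative integer multiplicities $c_\rho$, whence
\[
\sum_{p\leqslant X}\frac{a_{\zeta_K}^2(p)}{p}=\beta_K\sum_{p\leqslant X}\frac1p+\sum_{\rho\neq 1}c_\rho\sum_{p\leqslant X}\frac{\chi_\rho(\mathrm{Frob}_p)}{p}+\mathcal O(1).
\]
The first sum is $\beta_K\log\log X+\mathcal O(1)$ by Mertens' theorem. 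For each nontrivial irreducible $\rho$ the Artin $L$-function $L(s,\rho)$ is holomorphic and non-vanishing at $s=1$ (classical, via Brauer's theorem and the simple pole of the Dedekind zeta functions), so $\log L(s,\rho)=\sum_p\chi_\rho(\mathrm{Frob}_p)p^{-s}+\mathcal O(1)$ stays bounded as $s\to 1^+$, and Abel summation then gives $\sum_{p\leqslant X}\chi_\rho(\mathrm{Frob}_p)/p=\mathcal O(1)$. Collecting the contributions proves Conjecture A for $\zeta_K$ with $n_{\zeta_K}=\beta_K$, precisely the exponent predicted for the second moment and matching the lower bound of Theorem~\ref{Theorem "Continuous moment"}.
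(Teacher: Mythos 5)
You are right that this statement is Selberg's Conjecture A, which the paper states \emph{without proof} --- it is an open conjecture, used only as motivation for the expectation \eqref{Equation "Expectation"} --- so there is no proof in the paper to compare against, and your decision to treat the general case as out of reach and prove only the instance $F=\zeta_K$ is the correct reading of what the paper actually needs. Your unconditional argument for $\zeta_K$ arrives at the same facts as the paper but by a genuinely different route: the paper establishes the Conjecture-A-type asymptotic through the Chebotarev density theorem in its Mertens form, summing over conjugacy classes (Lemma \ref{Lemma "CDT 1"} and Corollary \ref{Corollary "CDT 1"}), and identifies the constant as $\langle\rho_H^{\otimes 2},1_G\rangle_G=\beta_K$ via Frobenius reciprocity and Mackey's theorem (Lemma \ref{Lemma "Auxiliary 1"}); you instead decompose $\chi_{\rho_H}^2$ into irreducibles first, handle the trivial constituent by Mertens' theorem, and kill each nontrivial constituent using holomorphy and nonvanishing of $L(s,\rho)$ on $\Re(s)\geqslant 1$ (the paper's Lemma \ref{Lemma "Artin"}). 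The two are essentially equivalent in analytic depth --- the $\log\log$ form of Chebotarev is itself proved from the same nonvanishing --- but your version makes the dependence on Brauer's theorem explicit, while the paper's formulation for arbitrary pairs of representations is what it later needs to verify the hypotheses of \cite[Theorem 2.3]{AkbaryFodden}. Your multiplicity computation $\langle\chi_{\rho_H}^2,1\rangle_G=\langle\rho_H,\rho_H\rangle_G=|H\backslash G/H|=\beta_K$ is correct (using that the permutation character $\chi_{\rho_H}$ is real), and matches Lemmas \ref{Lemma "Auxiliary 1"} and \ref{Lemma "Auxiliary 2"}.

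One step needs repair. Boundedness of $\log L(s,\rho)$ as $s\to 1^+$ along the real axis does not, by ``Abel summation,'' yield $\sum_{p\leqslant X}\chi_\rho(\sigma_p)/p=\mathcal{O}(1)$: partial sums of a Dirichlet series are not controlled by its boundary behavior on a real segment alone (this is an abelian-to-tauberian direction). The standard fix is classical: by Brauer's induction theorem, $L(s,\rho)$ is a product of integral (possibly negative) powers of Hecke $L$-functions, and the Mertens--Landau theorem for nontrivial Hecke characters --- nonvanishing on the whole line $\Re(s)=1$ together with the prime ideal theorem and partial summation --- gives the bounded partial sums directly. With that substitution your special-case proof is complete, and $n_{\zeta_K}=\beta_K$ is exactly the exponent appearing in Theorem \ref{Theorem "Continuous moment"}.
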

	
	\begin{conjecture*}[Conjecture B]
		\begin{enumerate}
			\item	For any primitive function $F$, $n_F=1$.
			\item 	For two distinct primitive functions $F, F'$,
			\[
			\sum_{p\leqslant T} \frac{a_F(p)\overline{a_{F'}(p)}}{p} = \mathcal{O}(1).
			\]
		\end{enumerate}
	\end{conjecture*}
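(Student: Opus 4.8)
The plan is to recast both parts of the conjecture as statements about the behaviour at $s=1$ of the Rankin--Selberg convolution of the Dirichlet series involved, and then to convert that analytic information into the asserted prime sums by a Mertens-type Tauberian argument. For $F,F'\in\mathcal{S}$ with local parameters as in axiom (4), one wants to attach a convolution $L(s,F\times\overline{F'})$ whose logarithm satisfies
\[
\log L(s,F\times\overline{F'}) = \sum_{p}\frac{a_F(p)\overline{a_{F'}(p)}}{p^{s}} + G(s),
\]
where $G(s)$ is holomorphic and bounded for $\Re(s)>\tfrac12$ (the contribution of the higher prime powers $p^k$, $k\geqslant 2$, being controlled by the Ramanujan bound in axiom (5) together with the bound $\theta<1/2$ in axiom (4)). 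Thus the size of the prime sum $\sum_{p\leqslant T} a_F(p)\overline{a_{F'}(p)}/p$ is governed entirely by the order of the pole, or the non-vanishing, of $L(s,F\times\overline{F'})$ at $s=1$.

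First I would treat part (1). For primitive $F$ one expects $L(s,F\times\overline{F})$ to have a \emph{simple} pole at $s=1$; since the coefficients $|a_F(p)|^2$ are non-negative, Landau's theorem (or an Ikehara--Tauberian argument) then converts the simple pole into
\[
\sum_{p\leqslant X}\frac{|a_F(p)|^2}{p} = \log\log X + O(1),
\]
which is exactly Conjecture A with $n_F=1$. For part (2) I would show that for distinct primitive $F\neq F'$ the convolution $L(s,F\times\overline{F'})$ is holomorphic and non-vanishing at $s=1$; boundedness of the associated prime sum then follows from a standard zero-free-edge Tauberian input. Combining these with the (conjectural) unique factorization $F=\prod_j F_j^{m_j}$ into primitives recovers the general formula $n_F=\sum_j m_j^2$, tying the two parts together.

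\emph{The main obstacle} is that the Selberg class, as axiomatised in (1)--(5), supplies no Rankin--Selberg theory whatsoever: there is no known construction \emph{within} $\mathcal{S}$ of the convolution $L(s,F\times\overline{F'})$, and no proof that such a function admits meromorphic continuation with the required simple pole at $s=1$ in the diagonal case, or holomorphy and non-vanishing there in the off-diagonal case. Establishing precisely these analytic inputs is tantamount to knowing that the primitive elements of $\mathcal{S}$ are cuspidal automorphic $L$-functions, for which the needed pole and non-vanishing statements are furnished by the work of Jacquet--Shalika and Moeglin--Waldspurger. Consequently the conjecture is out of reach unconditionally and is in fact one of the central open problems concerning $\mathcal{S}$, known to imply both unique factorization into primitives and Artin's holomorphy conjecture. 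In the arithmetic situation of the present paper, where the relevant primitive constituents are the Artin $L$-functions attached to the irreducible subrepresentations of $\rho_H$, the orthogonality in part (2) is the analytic shadow of the orthogonality of irreducible characters of $G$, and can be established only in those cases (for instance monomial, or otherwise solvable, extensions) where the requisite automorphy is already known.
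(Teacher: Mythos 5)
You have correctly recognized that there is nothing here to prove: the statement is Selberg's Conjecture B, which the paper records verbatim as an open conjecture, offers no proof of, and invokes only conditionally (in the Further Remarks section, where the identity $\mathsf{m}_1^{(2)} = \sum_i e_i^2$ of Lemma \ref{Lemma "Auxiliary 2"} is observed to follow from Conjecture B together with the conjectural primitivity of the Artin $L$-functions $L(s,\xi_i)$, via \cite{RMSelberg}). Your diagnosis of the obstruction is the standard and accurate one: the axioms (1)--(5) provide no Rankin--Selberg convolution attached to a pair $F, F' \in \mathcal{S}$, and the two analytic inputs your sketch requires --- a simple pole of $L(s, F\times\overline{F})$ at $s=1$ for primitive $F$, and holomorphy with non-vanishing of $L(s, F\times\overline{F'})$ at $s=1$ for distinct primitives --- are known only after identifying the primitive elements with cuspidal automorphic $L$-functions, where they are theorems of Jacquet--Shalika and Moeglin--Waldspurger. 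Your surrounding remarks also match how the paper positions the conjecture: Conjecture B implies unique factorization into primitives and, by Murty's work (the paper's reference \cite{RMSelberg}), the strong Artin conjecture, which is precisely the conditional use made of it in Section \ref{Section "Further Remarks"}. One caution on your sketch: even granting meromorphic continuation of the convolution, converting a simple pole at $s=1$ into the Mertens-type estimate $\sum_{p\leqslant X} |a_F(p)|^2/p = \log\log X + O(1)$ requires in addition control at the edge $\Re(s) = 1$ (non-vanishing, or a zero-free region, for the convolution); in the diagonal case non-negativity of the coefficients and Landau's theorem help, but in the off-diagonal case of part (2) the boundedness of the prime sum genuinely needs non-vanishing at $s=1$, which is again supplied by the automorphic theory and not by the Selberg axioms --- so the Tauberian step is not free even conditionally.
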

	
	It is expected that for an irreducible representation $\xi$ of the Galois group $G$, the Artin $L$ function $L(s,\xi)$ is a primitive element of the Selberg class. If $\{\xi_i\}$ is a complete list of irreducible representations of $G$, then $\zeta_K(s) = \prod_i L(s,\xi_i)^{e_i}$ is a decomposition of $\zeta_K(s)$ into primitive elements (inside the Selberg class), where we have set $e_i := \langle \rho_H, \xi_i\rangle_G$. In this situation \cite[Conjecture 5]{Heapthesis} along with Lemmas \ref{Lemma "Auxiliary 1"} and \ref{Lemma "Auxiliary 2"} leads to the conjecture 
	\begin{equation}\label{Equation "Expectation"}
		\int\limits_{1}^{T} \left|\zeta_K\left(\frac{1}{2}+it\right)\right|^{2k} dt \sim c(k,K) T \log^{\beta_K k^2}(T),
	\end{equation}
	for $k > 0$ and some constant $c(k,K)$ depending on $k$ and $K$.
	Finally, if $K$ is Galois over $\Q$, $H$ will be the trivial subgroup and $\beta_K = d$. The only case of \eqref{Equation "Expectation"} known to be true is when $K$ is a quadratic extension of $\Q$ and when $k=1$ \cite{Motohashi}.
	
	Regarding upper bounds for the moments, very little is known unconditionally. From their approximate functional equation, Chandrasekharan and Narasimhan (\cite[see pg. 61]{ChandNarApprox}) were able to deduce that 
	\begin{equation}\label{Equation "ChandNar"}
		\frac{1}{T}\int\limits_{1}^{T} \left|\zeta_K\left(\frac{1}{2}+it\right)\right|^2 dt = \sum_{m\leqslant cT^{\frac{d}{2}}} \frac{a_K^2(m)}{m} + \mathcal{O}\left(T^{\frac{d}{2}-1}\log^{d}T\right) = \mathcal{O}\left(T^{\frac{d}{2}-1}\log^{d}T\right)
	\end{equation}
	for some constant $c > 0$ and $d > 2$. As a consequence of Theorem \ref{Theorem "Discrete moments"}, we may improve this as follows.
	
	\begin{theorem}\label{Theorem "Improvement"}
		With notation as above, we have
		\begin{equation}
			\frac{1}{T}\int\limits_{1}^{T} \left|\zeta_K\left(\frac{1}{2}+it\right)\right|^2 dt = \sum_{m\leqslant  cT^{\frac{d}{2}}} \frac{a_K^2(m)}{m} + \mathcal{O}\left(T^{\frac{d}{2}-1}\log^{\beta_K}T\right).
		\end{equation}
		In particular,
		\begin{equation}
			\frac{1}{T}\int\limits_{1}^{T} \left|\zeta_K\left(\frac{1}{2}+it\right)\right|^2 dt = \mathcal{O}\left(T^{\frac{d}{2}-1}\log^{\beta_K}T\right)
		\end{equation}
		whenever $d > 2$.
	\end{theorem}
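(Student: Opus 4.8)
The plan is to take the Chandrasekharan--Narasimhan identity \eqref{Equation "ChandNar"} as the starting point and feed into it the sharp coefficient asymptotic supplied by Theorem \ref{Theorem "Discrete moments"}. Since the main term $\sum_{m\leqslant cT^{d/2}} a_K^2(m)/m$ is literally the same in both statements, the entire content is to replace the error exponent $d$ by $\beta_K$. First I would record the shape of the error in the derivation of \eqref{Equation "ChandNar"}: the remainder arising from the approximate functional equation for $\zeta_K$ (whose analytic conductor at height $t$ is $\asymp t^{d}$, giving a main sum of length $\asymp t^{d/2}$) is, after integrating in $t$ and dividing by $T$, controlled by
\[
T^{\frac{d}{2}-1}\sum_{m\leqslant cT^{\frac d2}}\frac{a_K^2(m)}{m},
\]
together with off-diagonal and boundary contributions of the same or smaller order. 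Chandrasekharan and Narasimhan bounded the displayed sum by the estimate valid in the Galois case, $\sum_{m\leqslant X} a_K^2(m)\ll X\log^{d-1}X$, which by partial summation gives $\sum_{m\leqslant X}a_K^2(m)/m\ll\log^{d}X$ and hence the error $T^{d/2-1}\log^d T$.

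Next I would compute the exponent that Theorem \ref{Theorem "Discrete moments"} actually produces for $l=2$. Since $\chi_{\rho_H}$ is the permutation character of $G$ acting on $G/H$, it is real-valued, so
\[
\frac{1}{|G|}\sum_{g\in G}\chi_{\rho_H}^2(g)=\langle\chi_{\rho_H},\chi_{\rho_H}\rangle_G,
\]
and by Burnside's lemma this inner product counts the orbits of $G$ on $(G/H)\times(G/H)$, which is precisely the number of double cosets $|H\backslash G/H|=\beta_K$. Thus $\mathsf m_2=\beta_K-1$, and Theorem \ref{Theorem "Discrete moments"} gives $\sum_{m\leqslant X}a_K^2(m)\sim c(2,K)\,X\log^{\beta_K-1}X$; partial summation then yields $\sum_{m\leqslant X}a_K^2(m)/m\asymp\log^{\beta_K}X$. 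Substituting $X=cT^{d/2}$ into the error bound above, each occurrence of the Galois exponent $d-1$ is replaced by the sharp exponent $\beta_K-1$, so the error becomes $T^{d/2-1}\log^{\beta_K}T$, which is the first displayed equation. The ``in particular'' statement is then immediate: when $d>2$ the factor $T^{d/2-1}$ grows polynomially while the main term is only $\asymp\log^{\beta_K}T$, so the error term absorbs the main term and the whole quantity is $\mathcal{O}(T^{d/2-1}\log^{\beta_K}T)$.

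I expect the only real work to be the bookkeeping inside the derivation of \eqref{Equation "ChandNar"}: one must check that every error contribution in the Chandrasekharan--Narasimhan argument---the remainder of the approximate functional equation, the off-diagonal terms $m\neq n$, and the smoothing/boundary terms near the transition $m\approx cT^{d/2}$---is ultimately dominated by a partial sum of $a_K^2(m)$ (or its $1/m$-weighted analogue), so that feeding in the sharp exponent $\beta_K-1$ genuinely removes the stray power of $\log$ rather than merely one instance of it; in particular the off-diagonal terms require their own estimate and are not automatically bounded by $\sum a_K^2(m)$. A useful consistency check is that in the Galois case $H\trianglelefteq G$ forces $\beta_K=d$, so the new bound collapses to \eqref{Equation "ChandNar"} there, confirming that no contribution has been overlooked; the improvement is genuine exactly when $K/\Q$ is non-Galois, where $\beta_K<d$.
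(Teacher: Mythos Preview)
Your proposal is correct and follows essentially the same route as the paper: feed the $l=2$ case of Theorem~\ref{Theorem "Discrete moments"} into the Chandrasekharan--Narasimhan argument of \cite[\S6]{ChandNarApprox}, with the key off-diagonal bound (the ``real work'' you flag) made explicit in the paper as Lemma~\ref{Lemma "ChandNar"}. The only cosmetic difference is that you compute $\mathsf{m}_1^{(2)}=\beta_K$ via Burnside's orbit-counting on $(G/H)\times(G/H)$, whereas the paper (Lemma~\ref{Lemma "Auxiliary 1"}) reaches the same identity through Frobenius reciprocity and Mackey's decomposition.
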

	 The fact that this is indeed an improvement follows from Lemma \ref{Lemma "Group Theory"}.

	\section{Preliminaries}\label{Section "Preliminaries"}
	For the convenience of the reader, we compile some basic facts which we shall use throughout the proofs.
	\subsection{Character theory}
	Given an $n$ dimensional complex representation $\xi$ of a finite group $G$, we denote its character (trace) as $\chi_\xi$. The characters associated to irreducible representations of $G$ form an orthonormal basis for the class functions on $G$ with the inner product defined as 
	\begin{equation}\label{Equation "Definition - Inner Product"}
		\langle f_1, f_2\rangle_G := \frac{1}{|G|} \sum_{g\in G} f_1(g) \overline{f_2(g)}. 
	\end{equation}	
	
	Given two representations $(\xi_1, V_1)$ and $(\xi_2, V_2)$ of a group $G$, we may consider the representation $(\xi_1\otimes \xi_2, V_1\otimes V_2)$ defined as $(\xi_1\otimes \xi_2) (g) (v_1\otimes v_2) = \xi_1(g) (v_1)\otimes \xi_2(g) (v_2)$ for any $v_1\in V_1$ and $v_2\in V_2$ and extended linearly. This is well-defined and satisfies
	\begin{equation}\label{Equation "Character of tensor product"}
		\chi_{\xi_1\otimes \xi_2}(g) = \chi_{\xi_1}(g) \cdot \chi_{\xi_2}(g)
	\end{equation}
	for any $g\in G$. In general the tensor product of two irreducible representations is not irreducible. Understanding the decomposition of tensor products of representations into irreducibles is often referred to as the Clebsch-Gordon problem.
	
	\subsection{Artin $L$ functions}
	
	We start with a number field $ L $, which we shall assume is Galois over $ \Q $. Suppose $G = Gal(L/\Q)$. Let $ v $ (associated with the rational prime $p$) denote a finite place of $ \Q $ and let $ w $ be a place of $ L $ above $ v $. Let $G_w$ and $I_w$ denote the corresponding decomposition and inertia subgroups. We may define an element $\sigma_w$ of $G_w/I_w$ called the Frobenius element at $w$. Except for finitely many places $v$, the inertia subgroup $I_w$ is trivial and thus in those cases $\sigma_w$ is an element of the Galois group $G$. In any case, as $w$ runs through the places over $v$, the corresponding Frobenius elements (defined modulo inertia) are conjugates of one another. Thus, by abuse of notation, we shall consider the Frobenius at $v$ (or $p$) and denote it by $\sigma_v$ (or $\sigma_p$). This is justified because we shall be primarily interested in functions of $\sigma_w$ which are invariant under conjugation (such as trace).
	
	Suppose that $\xi : G \to Aut(V)$ is a representation over a (finite dimensional) complex vector space $V$. For every $w$, $\xi$ maybe considered as a representation of $G_w$ on $V$ and thus yields a representation of $G_w/I_w$ on the fixed subspace $V^{I_w}$. The Artin $ L $ function attached to the representation $ \xi $ is defined by 
	\begin{equation}\label{Equation "Artin L function definition"}
		L(s,\xi) := \prod_{v < \infty} \frac{1}{\det\left(\left(Id - p^{-s}\xi(\sigma_w))\right| V^{I_w}\right)} =: \prod_{p} L_p(\xi,s).
	\end{equation}
	The product is absolutely convergent for $ \Re(s) > 1 $. We collect some of the important properties of the Artin $ L $ function for future reference.
	
	\begin{proposition}\label{Proposition "Artin L function Properties"}
		The Artin $ L $ functions defined above has the following properties.
		\begin{enumerate}
			\item	$ L(s, \xi_1\oplus\xi_2) = L(s,\xi_1)L(s,\xi_2) $ for any two representations $ \xi_1 $ and $ \xi_2 $.
			\item	Suppose $ \Q\subset K\subset L $ is an intermediate field which is Galois over $ \Q $. Let $ H = \Gal\left(L/K\right) $. Then a representation of $ \xi $ of $ G/H $ may be lifted to a representation $ \tilde{\xi} $ via the canonical projection $ G\to G/H $. Then $ L(s,\xi) = L(s,\tilde{\xi}) $, where the first $ L $ function is considered in the setting of $ L $ over $ \Q $ and the second $ L $ function is considered in the setting $ K $ over $ \Q $.
			\item	Suppose $ K $ is an intermediate field, not necessarily Galois over $ \Q $. Let $ H $ denote the Galois group of $ L $ over $ K $. For a representation $ \xi $ of $ H $, we have $ L(s, \xi) = L(s, \mbox{Ind}_{H}^G \xi) $, where $ \mbox{Ind}_{H}^G \xi $ denotes the representation induced from $ H $ to $ G $.
			\item 	With notation as in the previous statement. Then 
			\[
			\zeta_{K}(s) = L(s, \mbox{Ind}_{H}^G 1_{H})
			\]
			where $1_{H}$ is the trivial representation of $H$.
		\end{enumerate}
	\end{proposition}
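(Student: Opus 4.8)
The plan is to reduce every assertion to a statement about the local Euler factors and then verify it prime by prime. Each $L(s,\xi)$ is an absolutely convergent Euler product $\prod_p L_p(\xi,s)$ for $\Re(s)>1$, and each $L_p(\xi,s)$ is a rational function of $p^{-s}$ with constant term $1$; hence two Artin $L$-functions agree on $\Re(s)>1$ (and therefore everywhere, by uniqueness of meromorphic continuation) as soon as their Euler factors agree at every prime $p$. I would fix a rational prime $p$ throughout and a place $w$ of $L$ above it, with decomposition group $G_w$, inertia subgroup $I_w$, and Frobenius $\sigma_w$. Property (1) is then immediate: since $(\xi_1\oplus\xi_2)(g)$ acts block-diagonally on $(V_1\oplus V_2)^{I_w}=V_1^{I_w}\oplus V_2^{I_w}$ for every $g\in G_w$, the determinant defining $L_p(\xi_1\oplus\xi_2,s)$ factors as the product of the two determinants, and taking the product over $p$ gives the claim.

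For property (2) I would compare the two local pictures through the projection $\pi\colon G\to G/H$. Because $K$ is Galois over $\Q$, $H$ is normal and $G/H=\Gal(K/\Q)$; the place $w$ restricts to a place $w_K$ of $K$ above $p$ whose decomposition group, inertia group and Frobenius in $G/H$ are exactly the images $\pi(G_w)$, $\pi(I_w)$ and $\pi(\sigma_w)$. Since $\tilde{\xi}=\xi\circ\pi$, one has $\tilde{\xi}(\sigma_w)=\xi(\pi(\sigma_w))$, while $I_w$ acts on $V$ through $\pi$, so the subspace fixed by $\tilde{\xi}(I_w)$ is the subspace fixed by $\xi(\pi(I_w))$. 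Thus the local factor of $\tilde{\xi}$ at $p$ computed in the setting $L/\Q$ coincides factor-for-factor with the local factor of $\xi$ at $p$ computed in the setting $K/\Q$, and the equality of the global $L$-functions follows.

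Property (3) is the heart of the matter and the step I expect to be the main obstacle. Here $K=L^H$ need not be Galois, and the correct statement is that the single Euler factor $L_p(\mathrm{Ind}_H^G\xi,s)$ equals the product $\prod_{w'\mid p}L_{w'}(\xi,s)$ of the local factors of $\xi$ over the places $w'$ of $K$ above $p$. The plan is to use the fact that the places $w'$ of $K$ above $p$ are in bijection with the double cosets in $H\backslash G/G_w$, together with Mackey's restriction formula
\[
\mathrm{Res}_{G_w}\,\mathrm{Ind}_H^G\xi \;\cong\; \bigoplus_{g\in G_w\backslash G/H} \mathrm{Ind}_{G_w\cap gHg^{-1}}^{G_w}\,{}^{g}\xi,
\]
where ${}^{g}\xi$ denotes the conjugated representation. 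For each summand I would pass to $I_w$-invariants and compute the action of $\sigma_w$, matching the residual degree and ramification data of the corresponding place $w'$ of $K$ against a block of the determinant for $\mathrm{Ind}_H^G\xi$. The delicate part is the bookkeeping that links inertia, Frobenius and the induced pieces across the double-coset decomposition, and this is where most of the work lies.

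Finally, property (4) follows by specializing (3) to the trivial representation $\xi=1_H$. On the one hand $\mathrm{Ind}_H^G 1_H$ is simply the permutation representation of $G$ on the cosets $G/H$. On the other hand, for the trivial representation of $H=\Gal(L/K)$ in the setting $L/K$ the space is one-dimensional, inertia acts trivially and Frobenius acts as the identity, so the local factor at a place $w'$ of $K$ is $\left(1-(Nw')^{-s}\right)^{-1}$; the product of these over all $w'$ is exactly the Euler product defining $\zeta_K(s)$. Hence $L(s,1_H)=\zeta_K(s)$, and combining this with (3) yields $\zeta_K(s)=L(s,\mathrm{Ind}_H^G 1_H)$.
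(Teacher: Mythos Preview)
The paper does not prove this proposition at all: it is stated as a compilation of standard facts about Artin $L$-functions (with an implicit reference to textbooks such as \cite{Neukirch}), and is followed only by a remark that it will be used freely in the sequel. Your sketch is correct and is precisely the classical argument one finds in the literature: reduce to local Euler factors, handle (1) by block-diagonality, (2) by transporting decomposition/inertia/Frobenius through the quotient map, (3) via Mackey's decomposition of $\mathrm{Res}_{G_w}\mathrm{Ind}_H^G\xi$ together with the bijection between places of $K$ above $p$ and double cosets $H\backslash G/G_w$, and (4) by specializing (3) to $\xi=1_H$. So there is nothing to compare against in the paper, but your approach is the standard one and is sound; the only part that genuinely requires care, as you note, is the ramified-prime bookkeeping in (3), where one must check that passing to $I_w$-invariants commutes correctly with the Mackey summands.
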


	\begin{remark*}
		In the sequel, we shall use Proposition \ref{Proposition "Artin L function Properties"} repeatedly at various steps without referring back to it every time.
	\end{remark*}

%
	
	
	For an irreducible non-trivial representation $\xi$ of $G$, the Artin holomorphy conjecture asserts that $L(s,\xi)$ continues holomorphically to the whole complex plane. This is unknown at the moment but we have a very general partial result. Brauer's induction theorem is an important result in representation theory of groups which is particularly consequential in the study Artin $L$ functions. Suppose that $\xi$ is an irreducible non-trivial representation of $G$. Brauer's theorem establishes the meromorphic continuation of $L(s,\xi)$ to the whole complex plane. The following slightly stronger consequence of Brauer's theorem shall be useful for us in the sequel (see \cite[Corollary 5.47]{IwaniecKowalski}).
	
	\begin{lemma}\label{Lemma "Artin"}
		If $\xi$ is a non-trivial irreducible representation of $G$, then $L(s,\xi)$ has neither zeros nor poles in the region $\Re(s) \geqslant 1$.
	\end{lemma}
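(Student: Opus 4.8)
The plan is to reduce the statement, via Brauer's induction theorem, to the corresponding analytic facts about one-dimensional (abelian) Artin $L$-functions, which by class field theory are Hecke $L$-functions whose behaviour on the line $\Re(s)=1$ is classical. First I would invoke Brauer's induction theorem to write the character of $\xi$ as an integral combination of characters induced from linear characters of subgroups, say $\chi_\xi = \sum_j n_j \, \mathrm{Ind}_{H_j}^G \psi_j$ with $n_j\in\Z$, $H_j\leqslant G$, and each $\psi_j$ one-dimensional. By the additivity of Artin $L$-functions under direct sums and their invariance under induction (Proposition \ref{Proposition "Artin L function Properties"}, parts (1) and (3)), this yields the factorization $L(s,\xi)=\prod_j L(s,\psi_j)^{n_j}$ into one-dimensional Artin $L$-functions, where $L(s,\psi_j)$ is taken over the fixed field $L^{H_j}$. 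In particular this already furnishes the meromorphic continuation of $L(s,\xi)$.

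Next I would identify each $L(s,\psi_j)$ with a Hecke $L$-function. Since $\psi_j$ is a one-dimensional character of $H_j=\Gal(L/L^{H_j})$, Artin reciprocity matches it with a ray-class (Hecke) character of $L^{H_j}$, and the classical theory of Hecke $L$-functions then supplies: (i) if $\psi_j$ is nontrivial, $L(s,\psi_j)$ continues to an entire function that is nonvanishing on $\Re(s)\geqslant 1$; and (ii) if $\psi_j$ is trivial, $L(s,\psi_j)=\zeta_{L^{H_j}}(s)$ is holomorphic and nonvanishing on $\Re(s)\geqslant 1$ apart from a simple pole at $s=1$. The crucial analytic input here is the nonvanishing along the line $\Re(s)=1$, the generalization to Hecke $L$-functions of the classical fact that $\zeta(1+it)\neq 0$.

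Combining these, for $\Re(s)\geqslant 1$ with $s\neq 1$ every factor $L(s,\psi_j)$ is simultaneously holomorphic and nonzero, and since the exponents $n_j$ are integers the product is holomorphic and nonzero there as well; hence $L(s,\xi)$ has neither a zero nor a pole in this region. It remains to analyse $s=1$, where the only factors that are not holomorphic and nonvanishing are those with $\psi_j$ trivial, each contributing a simple pole. Therefore
\[
\mathrm{ord}_{s=1}L(s,\xi) = -\sum_{j:\,\psi_j=1_{H_j}} n_j = -\langle \chi_\xi, 1_G\rangle_G,
\]
where the last equality uses Frobenius reciprocity $\langle \mathrm{Ind}_{H_j}^G\psi_j,\,1_G\rangle_G=\langle\psi_j,\,1_{H_j}\rangle_{H_j}$. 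Because $\xi$ is irreducible and nontrivial, $\langle \chi_\xi,1_G\rangle_G=0$, so $L(s,\xi)$ is holomorphic and nonzero at $s=1$ as well, completing the argument.

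The main obstacle is genuinely analytic: the nonvanishing of Hecke $L$-functions on the line $\Re(s)=1$. This is the Hecke-character analogue of the nonvanishing of $\zeta(1+it)$ that underlies the prime number theorem, and I would establish it through the classical positivity argument applied to a suitable product (equivalently an abelian Dedekind zeta function) having nonnegative Dirichlet coefficients together with a pole at $s=1$. Everything else is formal: Brauer's theorem, the functorial properties of Artin $L$-functions collected in Proposition \ref{Proposition "Artin L function Properties"}, the class field theory dictionary between abelian Artin and Hecke $L$-functions, and Frobenius reciprocity.
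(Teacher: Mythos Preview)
Your argument is correct and is essentially the standard proof of this fact: Brauer induction reduces to abelian Artin $L$-functions, class field theory identifies these with Hecke $L$-functions, the classical nonvanishing on $\Re(s)=1$ handles $s\neq 1$, and Frobenius reciprocity shows the pole orders at $s=1$ cancel because $\langle\chi_\xi,1_G\rangle_G=0$.

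The paper, however, does not prove this lemma at all; it simply quotes it as a known consequence of Brauer's theorem, with a reference to \cite[Corollary 5.47]{IwaniecKowalski}. So there is nothing to compare on the level of strategy: you have supplied the textbook proof that the paper chose to cite rather than reproduce. Your sketch is in fact the argument one finds in Iwaniec--Kowalski, so in that sense your approach coincides with the (cited) source.
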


	\section{Proof of Theorem \ref{Theorem "Discrete moments"}}
	In the remainder of this paper, it is convenient, in many instances, to restrict ourselves to unramified primes. As we are omitting only finitely many primes, this does not affect the nature of our results but for the exact constants. The order of growth shall remain the same. The strategy of the proof is essentially that of \cite{KKJNT} adopted and generalized with modifications for the current needs.
	Define
	\begin{equation}\label{Equation "D_l(s) definition"}
		D_l(s) := \sum_{m=1}^{\infty} \frac{a_K^l(m)}{m^s} = \prod_{p}\left(1 + \frac{a_K^l(p)}{p^s} +\frac{a_K^l(p^2)}{p^{2s}} \ldots\right).
	\end{equation}
	From \eqref{Equation "Ramanujan bound"}, $D_l(s)$ is absolutely convergent for $\Re(s) > 1$ where the Euler product expression is valid. 
	
	\subsection{Meromorphic Continuation of $D_l(s)$}
	
	We wish to establish a meromorphic continuation of $D_l(s)$ to a larger domain.
	As $\zeta_K(s) = L(s, \rho_H)$, for every unramified prime $p$, we have 
	\[
	a_K(p) = \chi_{\rho_H}(\sigma_p)
	\]
	where $\sigma_p$ is a choice of Frobenius at $p$. From \eqref{Equation "Character of tensor product"}, it follows that
	\[
	a_K^l(p) = \chi_{\rho_H^{\otimes l}}(\sigma_p),
	\]
	where
	\[
	\rho_H^{\otimes l} = \underbrace{\rho_H\otimes \rho_H\otimes \ldots\otimes \rho_H}_{l \mbox{\tiny{ times}}}.
	\]
	Therefore from definition,
	\[
	L(s, \rho_H^{\otimes l}) = E(s) \prod_{p\mbox{\tiny{ unramified}}} \left(1 - \frac{a_K^l(p)}{p^s} + \ldots\right)^{-1},
	\]
	where $E(s)$ is the product of Euler factors at the ramified primes. Furthermore, for $\Re(s) > 1$, we have
	\begin{align}\nonumber
		L(s, \rho_H^{\otimes l}) &=\prod_{p} \left(1 - \frac{a_K^l(p)}{p^s}\right)^{-1} U_1(s)\\ \label{Equation "U_1"}
		&= \prod_{p} \left(1 + \frac{a_K^l(p)}{p^s} + \frac{a_K^{2l}(p)}{p^{2s}} + \ldots\right) U_1(s),
	\end{align}
	where $U_1(s)$ is holomorphic in the region $\Re(s) > \frac{1}{2}$. Comparing Euler factors with \eqref{Equation "D_l(s) definition"}, we see that 
	\begin{equation}\label{Equation "D_l(s) analytic continuation"}
		D_l(s) = L(s, \rho_H^{\otimes l}) U_2(s)
	\end{equation}
	where $U_2(s)$ is again a function holomorphic in the region $\Re(s) > \frac{1}{2}$. The region of holomorphy of $U_1(s)$ and $U_2(s)$ can be deduced using \eqref{Equation "Ramanujan bound"}. In particular, from the meromorphic continuation of $L(s, \rho_H^{\otimes l})$ to the whole complex plane, we may conclude that $D_l(s)$ continues meromorphically to the region $\Re(s) > \frac{1}{2}$.
	
	\subsection{Completing the proof}
	
	Let $\{\xi\}_{i=1}^{n}$ denote the complete set of irreducible representations of $G$ with $\xi_1$ being the trivial representation. Let
	\begin{equation}\label{Equation "chi decomposition"}
		\chi_{\rho_H^{\otimes l}} = \sum_{i=1}^{n} \mathsf{m}_i^{(l)} \chi_{\xi_i}
	\end{equation}
	denote the decomposition of $\rho_H^{\otimes l}$ into irreducible representations. Translating this into Artin $L$ functions gives us 
	\[
	L(s, \rho_H^{\otimes l})  = \prod_{i=1}^{n} L^{\mathsf{m}_i^{(l)}}(s, \xi_i) = \zeta^{\mathsf{m}_1^{(l)}}(s)\prod_{i=2}^{n} L^{\mathsf{m}_i^{(l)}}(s, \xi_i).
	\]
	Thus $L(s, \rho_H^{\otimes l})$ has a pole of order $\mathsf{m}_1^{(l)}$ at the point $s=1$ and is otherwise continuous on the half plane $\Re(s) \geqslant 1$ (from Lemma \ref{Lemma "Artin"}). Hence from \eqref{Equation "D_l(s) analytic continuation"}, $D_l(s)$ is a continuous function in the region $\Re(s) \geqslant 1$ but for a pole of order $\mathsf{m}_1^{(l)}$ at the point $s=1$. Now we apply the Delange-Ikehara Tauberian theorem (see \cite[Corollary, Pg. 121]{NarkiewiczNT}) to the Dirichlet series $D_l(s)$ and get
	\[
	M_K^{(l)}(T) \sim c T\log^{\mathsf{m}_1^{(l)}-1}(T)
	\]
	for some constant $c$. In fact, $c$ maybe expressed in terms of the leading coefficient in the Laurent series expansion of $D_l(s)$ about the point $s=1$.
	Finally from definition, we have 
	\begin{equation}\label{Equation "m_1^l formula"}
		\mathsf{m}_1^{(l)} = \langle \rho_H^{\otimes l}, 1_G\rangle_G = \frac{1}{|G|} \sum_{g\in G} \chi_{\rho_H}^l(g).
	\end{equation}	
	Setting $\mathsf{m}_l = \mathsf{m}_1^{(l)}-1$ completes the proof.
	
	\section{Proof of Theorem \ref{Theorem "Continuous moment"}}
	
	We first note two applications of the Chebotarev density theorem.

	\begin{lemma}\label{Lemma "CDT 1"}
		For any two representations $\rho_1, \rho_2$ of $G$, 
		\[
		\underset{p \mbox{ \tiny{unramified}}}{\sum_{p\leqslant T}} \frac{\chi_{\rho_1}(\sigma_p) \overline{\chi_{\rho_2}(\sigma_p)}}{p} = \langle \rho_1, \rho_2\rangle_G \log\log(T) + \mathcal{O}(1).
		\]
	\end{lemma}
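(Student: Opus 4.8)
The plan is to reduce the weighted prime sum to three ingredients: the identity $\chi_{\rho_1}(g)\overline{\chi_{\rho_2}(g)} = \chi_{\rho_1\otimes\overline{\rho_2}}(g)$ (so that the summand is the character of a genuine representation evaluated at Frobenius), Mertens' theorem for the trivial constituent, and the non-vanishing and holomorphy supplied by Lemma \ref{Lemma "Artin"} for the non-trivial constituents. First I would decompose $\rho_1\otimes\overline{\rho_2} \cong \bigoplus_{i=1}^{n} m_i\,\xi_i$ into irreducibles, with $\xi_1 = 1_G$ the trivial representation and $m_i\in\Z_{\geqslant 0}$. Taking characters and using \eqref{Equation "Character of tensor product"} gives $\chi_{\rho_1}(\sigma_p)\overline{\chi_{\rho_2}(\sigma_p)} = \sum_i m_i\,\chi_{\xi_i}(\sigma_p)$ for every unramified $p$, whence
\[
\underset{p \mbox{ \tiny{unramified}}}{\sum_{p\leqslant T}} \frac{\chi_{\rho_1}(\sigma_p)\overline{\chi_{\rho_2}(\sigma_p)}}{p} = \sum_{i=1}^{n} m_i \underset{p \mbox{ \tiny{unramified}}}{\sum_{p\leqslant T}} \frac{\chi_{\xi_i}(\sigma_p)}{p}.
\]
The multiplicity of the trivial representation is $m_1 = \langle \rho_1\otimes\overline{\rho_2}, 1_G\rangle_G = \frac{1}{|G|}\sum_{g\in G}\chi_{\rho_1}(g)\overline{\chi_{\rho_2}(g)} = \langle \rho_1,\rho_2\rangle_G$, so the lemma will follow once I show that the $i=1$ term contributes $\log\log(T) + \mathcal{O}(1)$ and that each $i\geqslant 2$ term is $\mathcal{O}(1)$.

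For $i=1$ we have $\chi_{\xi_1}(\sigma_p)=1$, and since only finitely many (ramified) primes are excluded, Mertens' theorem yields $\sum_{p\leqslant T} 1/p = \log\log(T) + \mathcal{O}(1)$. For a non-trivial irreducible $\xi_i$, I would compare the prime sum with $\log L(s,\xi_i)$: for $\Re(s)>1$,
\[
\log L(s,\xi_i) = \underset{p \mbox{ \tiny{unramified}}}{\sum_{p}} \frac{\chi_{\xi_i}(\sigma_p)}{p^s} + R_i(s),
\]
where $R_i(s)$ collects the prime-power contributions with $k\geqslant 2$ together with the finitely many ramified Euler factors, and is therefore holomorphic and bounded for $\Re(s)>\tfrac12$. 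By Lemma \ref{Lemma "Artin"}, $L(s,\xi_i)$ is holomorphic and non-vanishing on $\Re(s)\geqslant 1$, so $\log L(s,\xi_i)$, and hence the Dirichlet series $\sum_p \chi_{\xi_i}(\sigma_p)p^{-s}$, extends holomorphically across the line $\Re(s)=1$.

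The main obstacle is the Tauberian passage from this edge behaviour to the partial sum over $p\leqslant T$, since the coefficients $\chi_{\xi_i}(\sigma_p)$ are not of constant sign and Landau's positivity criterion does not apply directly. Here I would invoke the prime number theorem for $L(s,\xi_i)$: applying a suitable form of the Wiener--Ikehara (or Newman) Tauberian theorem to $-\frac{L'}{L}(s,\xi_i) = \sum_{n}\Lambda_{\xi_i}(n)\,n^{-s}$, whose holomorphy and non-vanishing on $\Re(s)\geqslant 1$ and absence of a pole (as $\xi_i$ is non-trivial) are again guaranteed by Lemma \ref{Lemma "Artin"}, and using the domination $|\Lambda_{\xi_i}(n)|\leqslant (\dim\xi_i)\Lambda(n)$ to handle the complex coefficients, gives $\sum_{p^k\leqslant x}\chi_{\xi_i}(\sigma_p^k)\log p = o(x)$. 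Partial summation then shows that $\sum_{p\leqslant T}\chi_{\xi_i}(\sigma_p)/p$ converges as $T\to\infty$, hence is $\mathcal{O}(1)$. Substituting these evaluations back yields $m_1\log\log(T) + \mathcal{O}(1) = \langle\rho_1,\rho_2\rangle_G\log\log(T) + \mathcal{O}(1)$, which is the asserted identity.
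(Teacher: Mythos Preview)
Your route is genuinely different from the paper's. The paper simply partitions the unramified primes by the conjugacy class of $\sigma_p$ and invokes the Mertens form of the Chebotarev density theorem,
\[
\underset{\sigma_p\in C}{\sum_{p\leqslant T}}\frac{1}{p}=\frac{|C|}{|G|}\log\log(T)+\mathcal{O}(1),
\]
for each class $C$; summing against the class function $\chi_{\rho_1}\overline{\chi_{\rho_2}}$ gives the result in two lines. You instead decompose $\rho_1\otimes\overline{\rho_2}$ into irreducibles and treat each constituent via the analytic behaviour of $L(s,\xi_i)$ at $s=1$. Both strategies are natural; yours is closer to reproving the relevant piece of Chebotarev rather than quoting it.

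There is, however, a real gap in your final Tauberian step. From Wiener--Ikehara/Newman you correctly extract
\[
\psi_{\xi_i}(x):=\sum_{p^k\leqslant x}\chi_{\xi_i}(\sigma_p^k)\log p = o(x),
\]
but this is \emph{not} enough to conclude that $\sum_{p\leqslant T}\chi_{\xi_i}(\sigma_p)/p$ converges. Partial summation gives
\[
\sum_{p\leqslant T}\frac{\chi_{\xi_i}(\sigma_p)}{p}=\frac{\psi_{\xi_i}(T)}{T\log T}+\int_{2}^{T}\psi_{\xi_i}(u)\,\frac{1+\log u}{(u\log u)^2}\,du + \mathcal{O}(1),
\]
and with only $\psi_{\xi_i}(u)=o(u)$ the integrand is $o\bigl(1/(u\log u)\bigr)$, so the integral is $o(\log\log T)$, not $\mathcal{O}(1)$. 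In other words, the qualitative prime number theorem for $L(s,\xi_i)$ yields only $o(\log\log T)$ for the non-trivial pieces, which is weaker than the lemma asserts.

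To repair this you need a quantitative input: either the standard zero-free region for $\zeta_L(s)$ (hence for each $L(s,\xi_i)$ as a factor), which gives $\psi_{\xi_i}(x)=O\bigl(x\exp(-c\sqrt{\log x})\bigr)$ and makes the integral above absolutely convergent, or equivalently the effective Chebotarev theorem itself. Once you invoke that level of input, your argument goes through; but note that at that point you are essentially using the same strength of result the paper quotes directly, only packaged through the irreducible decomposition rather than through conjugacy classes.
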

	
	\begin{proof}
		We shall restrict ourselves to unramified primes throughout the proof and remove it from notation. From the Chebotarev density theorem, for any conjugacy class $C$ of $G$, 
		\[
		\underset{\sigma_p\in C}{\sum_{p\leqslant T}}\frac{1}{p} = \frac{|C|}{|G|} \log\log(T) + \mathcal{O}(1).
		\]
		Therefore,
		\begin{align*}
			\sum_{p\leqslant T} \frac{\chi_{\rho_1}(\sigma_p) \overline{\chi_{\rho_2}(\sigma_p)}}{p} &= \sum_{C} \underset{\sigma_p\in C}{\sum_{p\leqslant T}} \frac{\chi_{\rho_1}(\sigma_p) \overline{\chi_{\rho_2}(\sigma_p)}}{p} \\
			&= \sum_{C} \chi_{\rho_1}(g_C) \overline{\chi_{\rho_2}(g_C)} \underset{\sigma_p\in C}{\sum_{p\leqslant T}} \frac{1}{p} \\
			&= \frac{1}{|G|} \sum_{C} \chi_{\rho_1}(g_C) \overline{\chi_{\rho_2}(g_C)} |C| \log\log(T) + \mathcal{O}(1)\\
			&= \left(\frac{1}{|G|} \sum_{g\in G} \chi_{\rho_1}(g) \overline{\chi_{\rho_2}(g)}\right) \log\log(T) + \mathcal{O}(1)\\
			&= \langle \rho_1, \rho_2\rangle_G \log\log(T) + \mathcal{O}(1).
		\end{align*}
		Here $\sum_{C}$ denotes the sum over the various conjugacy classes of $G$ and $g_C$ denotes an arbitrary element in each conjugacy class. This completes the proof.
	\end{proof}
	
	\begin{corollary}\label{Corollary "CDT 1"}
		With notation as above
		\[
		\sum_{n\leqslant T} \frac{a_K^l(p)}{p} = (\mathsf{m}_l+1) \log\log(T) + \mathcal{O}(1).
		\]
	\end{corollary}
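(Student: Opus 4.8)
The plan is to recognize this as an immediate specialization of Lemma \ref{Lemma "CDT 1"}, so the bulk of the work has already been done. First I would recall the identity established during the meromorphic continuation of $D_l(s)$, namely that for every unramified prime $p$ one has $a_K^l(p) = \chi_{\rho_H^{\otimes l}}(\sigma_p)$. This rewrites the sum, restricted to unramified primes, as a character sum over Frobenius elements, which is exactly the shape to which Lemma \ref{Lemma "CDT 1"} applies.

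Next I would invoke Lemma \ref{Lemma "CDT 1"} with the choices $\rho_1 = \rho_H^{\otimes l}$ and $\rho_2 = 1_G$, the trivial representation of $G$. Since $\chi_{1_G}(\sigma_p) = 1$ for every $p$, the product $\chi_{\rho_1}(\sigma_p)\overline{\chi_{\rho_2}(\sigma_p)}$ collapses to $\chi_{\rho_H^{\otimes l}}(\sigma_p) = a_K^l(p)$, and the lemma delivers
\[
\underset{p \text{ \tiny{unramified}}}{\sum_{p\leqslant T}} \frac{a_K^l(p)}{p} = \langle \rho_H^{\otimes l}, 1_G\rangle_G \log\log(T) + \mathcal{O}(1).
\]
It then remains only to identify the leading coefficient with the quantity appearing in the statement. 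By \eqref{Equation "m_1^l formula"} we have $\langle \rho_H^{\otimes l}, 1_G\rangle_G = \mathsf{m}_1^{(l)} = \mathsf{m}_l + 1$, which produces precisely the constant $\mathsf{m}_l + 1$ claimed.

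Finally, I would reconcile the unramified restriction in Lemma \ref{Lemma "CDT 1"} with the full sum over all primes $p\leqslant T$. The two differ only by the contribution of the finitely many ramified primes, and since $a_K^l(p)/p$ is bounded on any finite set of primes, this discrepancy is $\mathcal{O}(1)$ and is absorbed into the error term. Since every step here is a direct substitution into an already-proved lemma, there is no substantive obstacle; the only point meriting a moment of care is verifying that passing from unramified primes to all primes does not disturb the main term, which is immediate from the finiteness of the ramified set.
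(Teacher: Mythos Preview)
Your proposal is correct and follows essentially the same approach as the paper: both recognize $a_K^l(p)$ as $\chi_{\rho_H^{\otimes l}}(\sigma_p)$ for unramified $p$, apply Lemma \ref{Lemma "CDT 1"} with $\rho_1=\rho_H^{\otimes l}$ and $\rho_2=1_G$, and then identify the resulting inner product $\langle \rho_H^{\otimes l}, 1_G\rangle_G$ with $\mathsf{m}_l+1$ via \eqref{Equation "m_1^l formula"}. Your remark about absorbing the finitely many ramified primes into the $\mathcal{O}(1)$ term is handled identically in the paper.
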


	\begin{proof}
		We may replace the sum over all the primes with the sum over the unramified primes. We note that, for an unramified prime $p$, $a_K^l(p) = \chi_{\rho_H^{\otimes l}}(\sigma_p) \cdot \chi_{1_G}(\sigma_p)$. Therefore from the previous lemma, 
		\[
		\sum_{p\leqslant T} \frac{a_K^l(p)}{p} = \langle \rho_H^{\otimes l}, 1_G\rangle_G \log\log(T) + \mathcal{O}(1).
		\]
		From definition $\mathsf{m}_1^{(l)} = \langle \rho_H^{\otimes l}, 1_G\rangle_G = \mathsf{m}_l+1$. This completes the proof.
	\end{proof}
	
	The proof of the following lemma and corollary follow almost verbatim to the proofs above and hence we omit the details. We use the Chebatorev density theorem in the following form
	\[
	\underset{\sigma_p\in C,\ p \mbox{ \tiny{unramified}}}{\sum_{p\leqslant T}} 1 \sim \frac{|C|}{|G|} \frac{T}{\log(T)}.
	\]
	
	\begin{lemma}\label{Lemma "CDT 2"}
		For any two representations $\rho_1, \rho_2$ of $G$, 
		\[
		\underset{p \mbox{ \tiny{unramified}}}{\sum_{p\leqslant T}} \chi_{\rho_1}(\sigma_p) \overline{\chi_{\rho_2}(\sigma_p)} \sim \langle \rho_1, \rho_2\rangle_G \frac{T}{\log(T)}.
		\]
	\end{lemma}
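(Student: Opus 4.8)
The plan is to mirror the proof of Lemma \ref{Lemma "CDT 1"} almost verbatim, replacing the logarithmic-density form of the Chebotarev density theorem with the natural-density form
\[
\underset{\sigma_p\in C,\ p \mbox{ \tiny{unramified}}}{\sum_{p\leqslant T}} 1 \sim \frac{|C|}{|G|}\frac{T}{\log(T)}
\]
supplied above. As in that proof, I would first discard the finitely many ramified primes, whose total contribution is bounded and hence negligible against a main term of size $T/\log(T)$, and then suppress the unramified restriction from the notation.

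The key steps are as follows. I partition the sum over primes according to the conjugacy class $C$ of the Frobenius $\sigma_p$. Since $\chi_{\rho_1}$ and $\chi_{\rho_2}$ are class functions, on each class $C$ the summand equals the constant $\chi_{\rho_1}(g_C)\overline{\chi_{\rho_2}(g_C)}$ for any fixed representative $g_C$. Pulling this constant out of the inner sum and invoking the displayed Chebotarev asymptotic would give
\[
\sum_{C} \chi_{\rho_1}(g_C)\overline{\chi_{\rho_2}(g_C)}\,\frac{|C|}{|G|}\frac{T}{\log(T)}\bigl(1+o(1)\bigr) = \Biggl(\frac{1}{|G|}\sum_{g\in G}\chi_{\rho_1}(g)\overline{\chi_{\rho_2}(g)}\Biggr)\frac{T}{\log(T)} + o\!\left(\frac{T}{\log(T)}\right),
\]
where the conversion of the sum over classes into a sum over all of $G$ uses that $|C|$ is the size of the class, exactly as before. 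Recognizing the bracketed quantity as $\langle \rho_1, \rho_2\rangle_G$ via \eqref{Equation "Definition - Inner Product"} yields the claim.

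The only point requiring marginally more care than in Lemma \ref{Lemma "CDT 1"} is the bookkeeping of error terms under the $\sim$ notation. There the per-class error was an honest $\mathcal{O}(1)$ that survives summation over the finitely many classes; here each class contributes a relative error $o(T/\log(T))$, and since there are only finitely many conjugacy classes and the character values are bounded, the aggregate error remains $o(T/\log(T))$. I would also note that the leading coefficient $\langle \rho_1,\rho_2\rangle_G$ is a non-negative integer, so no cancellation among the complex weights $\chi_{\rho_1}(g_C)\overline{\chi_{\rho_2}(g_C)}$ can corrupt the asymptotic: when this inner product is positive the stated $\sim$ is genuine, and when it vanishes the same computation shows the sum is $o(T/\log(T))$. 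This is the whole of the argument, and no genuine obstacle arises beyond this routine error-term accounting.
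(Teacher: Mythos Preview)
Your proposal is correct and follows essentially the same approach the paper intends: the paper explicitly states that the proof of this lemma ``follow[s] almost verbatim'' from that of Lemma~\ref{Lemma "CDT 1"}, using the natural-density form of Chebotarev displayed just before the statement, which is exactly what you do. Your additional remarks on aggregating the $o(T/\log T)$ errors over finitely many classes and on the degenerate case $\langle\rho_1,\rho_2\rangle_G=0$ are sound refinements that the paper leaves implicit.
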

	
	\begin{corollary}
		With notation as above
		\[
		\sum_{p\leqslant T} a_K^l(p) \sim (\mathsf{m}_l+1) \frac{T}{\log(T)}
		\]
	\end{corollary}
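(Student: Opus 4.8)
The plan is to read off the corollary directly from Lemma \ref{Lemma "CDT 2"}, in exact parallel with the deduction of Corollary \ref{Corollary "CDT 1"} from Lemma \ref{Lemma "CDT 1"}. First I would restrict the sum to unramified primes: only finitely many rational primes ramify in $L$, and by the Ramanujan bound \eqref{Equation "Ramanujan bound"} each of them contributes a bounded amount to $\sum_{p\leqslant T} a_K^l(p)$, so their total contribution is $\mathcal{O}(1)$ and is absorbed into the error term against the main term of order $T/\log(T)$. I therefore suppress the word ``unramified'' from the notation, as in the earlier proofs.

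Next I would use the character identities already established. For an unramified prime $p$ we have $a_K(p) = \chi_{\rho_H}(\sigma_p)$, so \eqref{Equation "Character of tensor product"} gives $a_K^l(p) = \chi_{\rho_H^{\otimes l}}(\sigma_p)$. Since the trivial character is identically $1$, I may write this as
\[
a_K^l(p) = \chi_{\rho_H^{\otimes l}}(\sigma_p)\cdot \overline{\chi_{1_G}(\sigma_p)},
\]
which is precisely the summand appearing in Lemma \ref{Lemma "CDT 2"} with $\rho_1 = \rho_H^{\otimes l}$ and $\rho_2 = 1_G$. Applying that lemma yields
\[
\sum_{p\leqslant T} a_K^l(p) \sim \langle \rho_H^{\otimes l}, 1_G\rangle_G \, \frac{T}{\log(T)}.
\]
To finish I would identify the constant: by \eqref{Equation "m_1^l formula"} we have $\langle \rho_H^{\otimes l}, 1_G\rangle_G = \mathsf{m}_1^{(l)}$, and by definition $\mathsf{m}_1^{(l)} = \mathsf{m}_l + 1$, which gives the claimed asymptotic.

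There is essentially no obstacle at the level of the corollary itself; it is a formal consequence of character orthogonality once Lemma \ref{Lemma "CDT 2"} is in hand. The genuine content sits one level down, in Lemma \ref{Lemma "CDT 2"}, whose proof (omitted as verbatim to Lemma \ref{Lemma "CDT 1"}) rests on the quantitative form of the Chebotarev density theorem recorded just above it, namely $\sum_{p\leqslant T,\ \sigma_p\in C} 1 \sim \frac{|C|}{|G|}\frac{T}{\log(T)}$. The only point I would double-check is that the asymptotic count of primes in a conjugacy class, rather than the weighted $\sum 1/p$ estimate used previously, is uniform enough to be summed over conjugacy classes with the weights $\chi_{\rho_1}(g_C)\overline{\chi_{\rho_2}(g_C)}|C|$; since $G$ is finite this is a sum of finitely many asymptotics and causes no difficulty.
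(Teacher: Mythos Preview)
Your proposal is correct and follows exactly the route the paper intends: the paper omits the proof, stating that it goes through verbatim as in Corollary~\ref{Corollary "CDT 1"} with Lemma~\ref{Lemma "CDT 2"} in place of Lemma~\ref{Lemma "CDT 1"}, and that is precisely what you have written out. Your closing remark about summing finitely many asymptotics over conjugacy classes is the only subtlety, and you have handled it correctly.
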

	
	\begin{lemma}\label{Lemma "Auxiliary 1"}
		With notation as above
		\[
		\mathsf{m}_1^{(2)} = \beta_K.
		\]
	\end{lemma}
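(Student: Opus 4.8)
The plan is to recognize $\mathsf{m}_1^{(2)}$ as the multiplicity of the trivial representation inside $\rho_H^{\otimes 2}$ and to compute this multiplicity combinatorially as an orbit count. Specializing \eqref{Equation "m_1^l formula"} to $l=2$ gives
\[
\mathsf{m}_1^{(2)} = \langle \rho_H^{\otimes 2}, 1_G\rangle_G = \frac{1}{|G|}\sum_{g\in G}\chi_{\rho_H}(g)^2.
\]
The key observation is that $\rho_H = \mathrm{Ind}_H^G 1_H$ is precisely the permutation representation of $G$ on the coset space $G/H$, so that $\chi_{\rho_H}(g)$ equals the number of cosets fixed by $g$. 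Consequently $\rho_H^{\otimes 2}$ is the permutation representation of $G$ on the product $(G/H)\times (G/H)$, and $\chi_{\rho_H}(g)^2$ counts the number of ordered pairs of cosets fixed by $g$.

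First I would invoke the standard orbit-counting (Burnside) lemma: for any permutation representation of $G$ on a finite set $X$, the multiplicity of the trivial representation equals the number of $G$-orbits on $X$, since $\frac{1}{|G|}\sum_{g\in G} |\mathrm{Fix}_X(g)|$ is exactly the orbit count. Applying this with $X = (G/H)\times (G/H)$ reduces the problem to counting the diagonal $G$-orbits on $(G/H)\times (G/H)$.

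Next I would identify these orbits with the double cosets $H\backslash G/H$. Because $G$ acts transitively on the first factor, every orbit contains a representative of the form $(H, gH)$, and two such pairs $(H, gH)$ and $(H, g'H)$ lie in the same orbit exactly when some $x\in G$ satisfies $xH = H$ and $xgH = g'H$. The first condition forces $x\in H$, and the second then says that $g$ and $g'$ determine the same double coset $HgH$. This yields a bijection between the diagonal $G$-orbits and $H\backslash G/H$, whence $\mathsf{m}_1^{(2)} = |H\backslash G/H| = \beta_K$.

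There is essentially no serious obstacle here; the only point that requires care is the orbit-to-double-coset bijection, which I would verify directly as above. As a cross-check, the same conclusion follows purely representation-theoretically: by Frobenius reciprocity $\langle \rho_H^{\otimes 2}, 1_G\rangle_G = \langle \rho_H, \rho_H\rangle_G = \langle 1_H, \mathrm{Res}_H \mathrm{Ind}_H^G 1_H\rangle_H$, and Mackey's decomposition of $\mathrm{Res}_H \mathrm{Ind}_H^G 1_H$ into a sum indexed by the double cosets contributes exactly one copy of the trivial character of $H$ per double coset, again giving $\beta_K$.
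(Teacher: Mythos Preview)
Your proof is correct. Your primary argument differs from the paper's: you interpret $\rho_H$ as the permutation representation on $G/H$, apply Burnside's lemma to identify $\mathsf{m}_1^{(2)}$ with the number of diagonal $G$-orbits on $(G/H)\times(G/H)$, and then give a direct bijection between these orbits and the double cosets $H\backslash G/H$. The paper instead works purely representation-theoretically, rewriting $\langle\rho_H^{\otimes 2},1_G\rangle_G=\langle\rho_H,\overline{\rho_H}\rangle_G$, applying Frobenius reciprocity, and then invoking Mackey's decomposition of $\mathrm{Res}_H\,\mathrm{Ind}_H^G 1_H$ to see one copy of $1_H$ per double coset---which is precisely the cross-check you sketch at the end. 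Your combinatorial route is more elementary and self-contained (no Mackey needed), while the paper's approach makes the double-coset indexing emerge from a standard structural theorem; both are equally valid and of comparable length. One small point in your cross-check: the equality $\langle\rho_H^{\otimes 2},1_G\rangle_G=\langle\rho_H,\rho_H\rangle_G$ implicitly uses that $\chi_{\rho_H}$ is real-valued, which is clear since it is a permutation character, but is worth stating.
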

	
	\begin{proof}
		Observe that
		\[
		\langle\rho_H^{\otimes 2}, 1_G\rangle_G = \frac{1}{|G|} \sum_{g\in G} \chi_{\rho_H}^2(g) = \langle \rho_H, \overline{\rho_H}\rangle_G.
		\]
		From Frobenius reciprocity
		\begin{align*}
			\langle\rho_H, \overline{\rho_H}\rangle_G	&= \langle Ind(1_H), \overline{Ind (1_H)}\rangle_G\\
			&= \left\langle 1_H, Res \left(\overline{Ind (1_H)}\right)\right\rangle_H.
		\end{align*}
		From Mackey's theorem, we have 
		\[
		Res \left(\overline{Ind (1_H)}\right) = \overline{Res \left(Ind (1_H)\right)} = \bigoplus_{g\in H\setminus G/H} \overline{Ind_{H_g}^H 1_{H_g}},
		\]
		where $H_g := gHg^{-1}\cap H$. Therefore,
		\[
		\left\langle1_H, Res \left(\overline{Ind (1_H)}\right)\right\rangle_H = \sum_{g\in H\setminus G/H} \langle 1_H, \overline{Ind_{H_g}^H 1_{H_g}}\rangle_H.
		\]
		The induced representation of $1_{H_g}$ contains precisely one copy of $1_H$ for every $g$ and hence $\langle \rho_H^{\otimes 2}, 1_G\rangle_G = \left|H \setminus G / H\right| = \beta_K $ completing the proof.
	\end{proof}
	
	\begin{proof}[Proof of Theorem \ref{Theorem "Continuous moment"}]
		Our strategy is to apply \cite[Theorem 2.3]{AkbaryFodden} to the Dedekind zeta function $\zeta_K(s)$. In their notation, $\zeta_K(s)$ satisfies the necessary conditions of their theorem with $\beta = \mathsf{m}_2 + 1 = \mathsf{m}_1^{(2)}$. This follows from Lemmas \ref{Lemma "CDT 1"} and \ref{Lemma "CDT 2"} above. From Lemma \ref{Lemma "Auxiliary 1"}, $\mathsf{m}_1^{(2)} = \beta = \beta_K$ and we are done.
	\end{proof}
	
	\section{Proof of Theorem \ref{Theorem "Improvement"}}
	
	The proof follows along the same lines as \cite[\S 6]{ChandNarApprox}, where we use Theorem \ref{Theorem "Discrete moments"} in place of their \cite[Theorem 3]{ChandNarApprox}. The key improvement of the bound on the off-diagonal terms is given in the following lemma. First we observe from Theorem \ref{Theorem "Discrete moments"} and the results in the previous section, that 
	\begin{equation}\label{Equation "Special case"}
		\sum_{m\leqslant T} a_K^2(m)\ll T\log^{\beta_K}(T).
	\end{equation}
	\begin{lemma}\label{Lemma "ChandNar"}
		Define
		\[
		S(T) := \sum_{m < n\leqslant T} \frac{a_K(m)a_K(n)}{\sqrt{mn} \log\left(\frac{n}{m}\right)}.
		\]
		Then $S(T) = \mathcal{O}\left(T \log^{\beta_K}(T)\right)$.
	\end{lemma}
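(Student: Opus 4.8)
The plan is to follow the strategy of \cite[\S 6]{ChandNarApprox}, splitting $S(T)$ according to whether $m$ and $n$ are far apart or close together, and to extract the needed saving from the \emph{sharp} exponent supplied by Theorem \ref{Theorem "Discrete moments"}. Write $S(T) = S_{\mathrm{far}}(T) + S_{\mathrm{close}}(T)$, where $S_{\mathrm{far}}$ collects the pairs with $n \geqslant 2m$ and $S_{\mathrm{close}}$ the pairs with $m < n < 2m$. For the far range, $\log(n/m) \geqslant \log 2$, so that
\[
S_{\mathrm{far}}(T) \ll \sum_{\substack{m<n\leqslant T\\ n\geqslant 2m}} \frac{a_K(m)a_K(n)}{\sqrt{mn}} \leqslant \left(\sum_{k\leqslant T}\frac{a_K(k)}{\sqrt{k}}\right)^2.
\]
By the case $l=1$ of Theorem \ref{Theorem "Discrete moments"} we have $\sum_{k\leqslant x}a_K(k)\sim c\,x$ (here $\mathsf{m}_1=0$, since $\langle\rho_H,1_G\rangle_G=1$), and partial summation then gives $\sum_{k\leqslant T}a_K(k)/\sqrt{k}\ll \sqrt{T}$. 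Hence $S_{\mathrm{far}}(T)\ll T$, which is absorbed into the claimed bound.

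For the close range I would first trade the logarithm for the gap. Since $n<2m$, one has $\log(n/m)=\log\!\big(1+\tfrac{n-m}{m}\big)\geqslant \tfrac{n-m}{n}>\tfrac{n-m}{2m}$, and $\sqrt{mn}>m$, so $\sqrt{mn}\log(n/m)>\tfrac12(n-m)$. Writing $r=n-m$ and applying $a_K(m)a_K(n)\leqslant \tfrac12\big(a_K^2(m)+a_K^2(n)\big)$ gives
\[
S_{\mathrm{close}}(T) \ll \sum_{\substack{m<n<2m\\ n\leqslant T}} \frac{a_K^2(m)+a_K^2(n)}{n-m}.
\]
In the first sum $m$ is held fixed and $r=n-m$ runs over $1\leqslant r<m$, so the inner sum is $\sum_{r<m}r^{-1}\ll\log T$; symmetrically, in the second sum $n$ is fixed and $r$ runs over $1\leqslant r<n/2$, again contributing $\ll\log T$. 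This yields $S_{\mathrm{close}}(T) \ll \log T\sum_{m\leqslant T}a_K^2(m)$.

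The decisive point is now to insert the \emph{sharp} second-moment estimate. By the case $l=2$ of Theorem \ref{Theorem "Discrete moments"} together with Lemma \ref{Lemma "Auxiliary 1"}, the relevant exponent is $\mathsf{m}_2=\mathsf{m}_1^{(2)}-1=\beta_K-1$, so that $\sum_{m\leqslant T}a_K^2(m)\ll T\log^{\beta_K-1}T$. Combining, $S_{\mathrm{close}}(T)\ll T\log^{\beta_K}T$, and adding $S_{\mathrm{far}}(T)\ll T$ completes the proof.

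The step I expect to require the most care is precisely this last one. The crude bounds on the close range each cost a factor of $\log T$, and this loss is genuine rather than an artefact of the method: already for $K=\Q$, where $a_K\equiv 1$, one has $S_{\mathrm{close}}(T)\asymp T\log T$ while the diagonal sum is $\asymp T$. The argument therefore only closes because the diagonal carries the exponent $\beta_K-1$ and not $\beta_K$, so one must be careful to invoke the asymptotic exponent of Theorem \ref{Theorem "Discrete moments"} and not a lossier upper bound. I would also emphasise that every term of $S(T)$ is positive, so no cancellation is available (ruling out any Hilbert-inequality shortcut) and the estimate must come entirely from diagonal domination.
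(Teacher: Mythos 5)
Your proof is correct and is essentially the paper's own argument (itself following Chandrasekharan--Narasimhan's Lemma 11): your far/close split is exactly what underlies the paper's single inequality $\log(n/m) > 1 - \tfrac{m}{n}$, the close range is reduced to the diagonal by AM--GM at the cost of a harmonic factor $\log T$, and the conclusion rests on the $l=2$ case of Theorem \ref{Theorem "Discrete moments"} together with Lemma \ref{Lemma "Auxiliary 1"}, just as in the paper. If anything, your bookkeeping is more careful than the paper's, which states that the off-diagonal sum is ``bounded above by $\sum_{m\leqslant T} a_K^2(m)$'' with the harmonic-sum factor $\log T$ left implicit and then cites the weakened bound \eqref{Equation "Special case"}; your closing observation that the argument only closes because the diagonal carries the sharp exponent $\beta_K - 1$ rather than $\beta_K$ is precisely the right point.
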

	
	\begin{proof}
		In the range $n > m$, we have $\log(\frac{n}{m}) > 1- \frac{m}{n}$. We therefore have
		\[
		S(T) < \sum_{m < n\leqslant T} \frac{a_K(m)a_K(n)}{\sqrt{mn}} + \sum_{m < n\leqslant T} \frac{a_K(m)a_K(n)}{n-m}.
		\]
		As in \cite[proof of Lemma 11]{ChandNarApprox}, the first term is $\mathcal{O}(T)$ and the second term is bounded above by $\sum_{m\leqslant T} a_K^2(m)$. The lemma now follows from \eqref{Equation "Special case"}.
	\end{proof}
	Now we may follow \cite[\S 6]{ChandNarApprox} almost verbatim and complete the proof of Theorem \ref{Theorem "Improvement"}. The following lemma applied to $G = Gal(L/\Q)$ and $H = Gal(L/K)$ implies that Theorem \ref{Theorem "Improvement"} is indeed an improvement over \eqref{Equation "ChandNar"}.
	\begin{lemma}\label{Lemma "Group Theory"}
		If $G$ is a group and $H$ is a subgroup of $G$, then $|H\setminus G/H| = |G/H|$ if and only if $H$ is a normal subgroup of $G$.
	\end{lemma}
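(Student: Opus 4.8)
The plan is to exploit the fact that the double coset $HgH$ is a disjoint union of left cosets of $H$, and to count how many. Since the left cosets partition $G$ and each one is contained in a unique double coset, summing over the double cosets the number of left cosets each contains recovers the total index $|G/H|$. This already yields $|H\setminus G/H|\leqslant |G/H|$, and the entire content of the lemma lies in determining exactly when equality occurs.

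The first technical step is to compute, for a fixed $g$, the number of distinct left cosets inside $HgH=\bigcup_{h\in H} hgH$. Two such cosets $h_1 gH$ and $h_2 gH$ coincide precisely when $g^{-1}h_2^{-1}h_1 g\in H$, that is when $h_2^{-1}h_1\in H\cap gHg^{-1}$. Consequently the number of left cosets contained in $HgH$ equals the index $[H:H\cap gHg^{-1}]$, which is always at least $1$ and is strictly greater than $1$ exactly when $H\not\subseteq gHg^{-1}$.

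Next I would write $G$ as the disjoint union of its double cosets and sum the above count, obtaining $|G/H|=\sum_{HgH}[H:H\cap gHg^{-1}]$, a sum of $|H\setminus G/H|$ terms each $\geqslant 1$. Hence the equality $|H\setminus G/H|=|G/H|$ holds if and only if every summand equals $1$, i.e.\ $H\cap gHg^{-1}=H$, equivalently $H\subseteq gHg^{-1}$, for every $g\in G$. The final step closes the loop with normality: if $H\subseteq gHg^{-1}$ for all $g$, then applying this with $g$ replaced by $g^{-1}$ gives $gHg^{-1}\subseteq H$ as well, so $gHg^{-1}=H$ for all $g$ and $H$ is normal; conversely, normality makes each double coset a single left coset, forcing equality.

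Since the statement is elementary, no genuine obstacle arises; the step requiring the most care is the coset-counting identity of the second paragraph, namely verifying that $HgH$ splits into exactly $[H:H\cap gHg^{-1}]$ left cosets. Once that bookkeeping is pinned down, both implications are purely formal, and one notes in particular that the argument only requires quantifying the containment $H\subseteq gHg^{-1}$ over all $g\in G$ rather than any finiteness input.
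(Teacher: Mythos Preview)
Your proof is correct and follows essentially the same approach as the paper: both count the left cosets contained in each double coset $HgH$ as $[H:H\cap gHg^{-1}]$ (the paper phrases this as the size of the orbit of $gH$ under the left $H$-action on $G/H$), and both conclude by observing that all such indices equal $1$ precisely when $H$ is normal. Your handling of the non-normal direction, using the substitution $g\mapsto g^{-1}$ to upgrade ``$H\subseteq gHg^{-1}$ for all $g$'' to full normality, is in fact slightly more explicit than the paper's.
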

	
	\begin{proof}
		Consider the action of $H$ on the left on the set of left cosets $G/H$ (the action of $h$ maps $gH$ to $(hg)H$). Then the set of double cosets is in bijection with the orbits under this action ($HgH\mapsto$ orbit of $gH$). It can be shown that the size of the orbit containing $HgH$ under this action is the index $[H : H\cap gHg^{-1}]$. If $H$ were normal in $G$, then each of the above index is $1$ and hence $|H\setminus G/H| = |G/H|$. If $H$ were not normal in $G$, then there exists a $g\in G$ such that $H\neq gHg^{-1}$. It follows that the size of the orbit containing $gH$ has at least two elements and thus the number of orbits (which equals $|H\setminus G/H|$) should be strictly less than the size of the set on which $H$ is acting (which is $|G/H|$).
	\end{proof}

	\section{Further remarks}\label{Section "Further Remarks"}
	
	We first have the following lemma.
	\begin{lemma}\label{Lemma "Auxiliary 2"}
		If
		\[
		\rho_H = \bigoplus_{i=1}^{n} \xi_i^{e_i}
		\]
		is the decomposition of $\rho_H$ into irreducible representations, then 
		\[
		\mathsf{m}_1^{(2)} = \sum_{i} e_i^2.
		\]
	\end{lemma}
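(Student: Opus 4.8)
The plan is to start from the $l=2$ case of the multiplicity formula \eqref{Equation "m_1^l formula"}, which gives
\[
\mathsf{m}_1^{(2)} = \langle \rho_H^{\otimes 2}, 1_G\rangle_G = \frac{1}{|G|}\sum_{g\in G}\chi_{\rho_H}^2(g),
\]
and to reduce the computation to the inner product $\langle \rho_H, \rho_H\rangle_G$, which Schur orthogonality then evaluates immediately in terms of the multiplicities $e_i$. The only substantive point is that $\chi_{\rho_H}$ is \emph{real-valued}, so that $\chi_{\rho_H}^2(g)$ may be rewritten as $\chi_{\rho_H}(g)\overline{\chi_{\rho_H}(g)}$, turning the sum above into a genuine character inner product.

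First I would observe that $\rho_H = \mathrm{Ind}_H^G 1_H$ is the permutation representation of $G$ on the coset space $G/H$. Its character therefore counts fixed cosets, namely $\chi_{\rho_H}(g) = \left|\{\, xH \in G/H : gxH = xH \,\}\right|$, which is a non-negative integer for every $g\in G$; in particular $\chi_{\rho_H}$ takes only real values. Equivalently, $1_H$ is self-dual and induction commutes with passing to the contragredient, so $\overline{\rho_H}\cong \rho_H$. Consequently, for every $g\in G$ we have $\chi_{\rho_H}^2(g) = \chi_{\rho_H}(g)\,\overline{\chi_{\rho_H}(g)}$, and summing over $g$ yields
\[
\mathsf{m}_1^{(2)} = \frac{1}{|G|}\sum_{g\in G}\chi_{\rho_H}(g)\,\overline{\chi_{\rho_H}(g)} = \langle \rho_H, \rho_H\rangle_G.
\]

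It then remains to expand this inner product. Writing $\chi_{\rho_H} = \sum_{i=1}^n e_i \chi_{\xi_i}$ from the given decomposition and using the orthonormality $\langle \xi_i, \xi_j\rangle_G = \delta_{ij}$ of the irreducible characters, I obtain
\[
\langle \rho_H, \rho_H\rangle_G = \sum_{i=1}^n\sum_{j=1}^n e_i e_j \langle \xi_i, \xi_j\rangle_G = \sum_{i=1}^n e_i^2,
\]
which is the claimed identity. I do not expect any real obstacle here: the entire argument rests on the reality of the permutation character $\chi_{\rho_H}$, after which the conclusion is a one-line consequence of Schur orthogonality. As a consistency check, comparing with Lemma \ref{Lemma "Auxiliary 1"} shows that the two computations together give $\sum_i e_i^2 = \beta_K$, which is the reconciliation one wants between the group-theoretic and double-coset descriptions of $\mathsf{m}_1^{(2)}$.
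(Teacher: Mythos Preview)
Your proof is correct and follows essentially the same route as the paper: both reduce $\mathsf{m}_1^{(2)}=\langle\rho_H^{\otimes 2},1_G\rangle_G$ to $\langle\rho_H,\rho_H\rangle_G$ by observing that $\chi_{\rho_H}$ is real-valued, and then invoke Schur orthogonality. The only cosmetic difference is that the paper verifies reality via the explicit induced-character formula $\chi_{\rho_H}(g)=\frac{1}{|H|}\sum_{r\in G}\tau_H(r^{-1}gr)$, whereas you use the equivalent fixed-coset count for the permutation representation on $G/H$.
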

	
	\begin{proof}
		Proceeding as in the previous proof, we have $\mathsf{m}_1^{(2)} = \langle \rho_H, \overline{\rho_H}\rangle_G$. But
		\[
		\chi_{\rho_H}(g) = \frac{1}{|H|} \sum_{r\in G} \tau_H(r^{-1}gr) \in \R.
		\]
		Here $\tau_H$ is the characteristic function of $H$. Hence $\rho_H = \overline{\rho_H}$ giving us $\mathsf{m}_1^{(2)} = \langle \rho_H, \rho_H\rangle_G = \sum_{i =1}^ne_i^2$. This completes the proof.
	\end{proof}
	
	As mentioned above, from the general conjectures regarding the primitivity of Artin $L$ functions, $\zeta_K(s) = \prod_{i=1}^{n} L(s,\xi_i)^{e_i}$ is a decomposition of $\zeta_K$ into primitive elements of $\mathcal{S}$ (where as before $\{\xi_i\}$ is a complete set of irreducible representations of $G$). If this is the case, Lemma \ref{Lemma "Auxiliary 2"} would follow from Selberg's Conjecture B (see \cite[Proposition 2.5(a)]{RMSelberg} and \cite[Corollary 3.2]{RMSelberg}). In any case the results of this paper would follow if we assume Selberg's conjectures. As was shown in \cite{RMSelberg}, Selberg's conjectures imply the Langlands reciprocity conjecture (or the strong Artin conjecture) from where we may show the required asymptotic in Theorem \ref{Theorem "Discrete moments"} with a power saving error term. In fact it is expected that the right hand side of \eqref{Equation "Discrete Moment"} should be of the form $TP_l(\log(T)) + \mathcal{O}(T^{1-\theta})$ for some polynomial $P_l$ of degree $\mathsf{m}_l$ and for some $\theta > 0$ (as in the main theorem of  \cite{KKJNT}).
	
	\section*{Acknowledgement}
	
	The author wishes to thank Prof. Sudhir Pujahari for his encouragement.
	
	\bibliographystyle{amsalpha} 
	
	\bibliography{Bibliography}	
	 
\end{document}